\def\Rset{\mathbb{R}}
\def\Nset{\mathbb{N}}
\def\Kset{\mathbb{K}}
\theoremstyle{plain}
\newtheorem{thm}{Theorem}[section]
\newtheorem{lem}[thm]{Lemma}
\newtheorem{cor}[thm]{Corollary}
\theoremstyle{definition}
\newtheorem{rem}[thm]{Remark}
\newtheorem{step}{Step}
\numberwithin{equation}{section}
\begin{document}

\begin{frontmatter}
\title{A general semilocal convergence theorem for simultaneous methods for polynomial zeros 
and its applications to Ehrlich's and Dochev-Byrnev's methods}
\author{Petko D. Proinov}\ead{proinov@uni-plovdiv.bg}
\address{Faculty of Mathematics and Informatics, University of Plovdiv, Plovdiv 4000, Bulgaria}

\begin{abstract}
In this paper, we establish a general semilocal convergence theorem (with computationally verifiable initial conditions and error estimates) 
for iterative methods for simultaneous approximation of polynomial zeros.
As application of this theorem, we provide new semilocal convergence results for Ehrlich's and Dochev-Byrnev's root-finding methods. 
These results improve the results of 
Petkovi\'c, Herceg and Ili\'c [Numer. Algorithms 17 (1998) 313--331] and
Proinov [C.~R. Acad. Bulg. Sci. 59 (2006) 705--712]. 
We also prove that Dochev-Byrnev's method (1964) is identical to Pre{\v s}i\'c-Tanabe's method (1972).
\end{abstract}

\begin{keyword}
simultaneous methods \sep polynomial zeros \sep semilocal convergence \sep error estimates 
\sep Ehrlich method \sep Dochev-Byrnev method
\MSC 65H04 \sep 12Y05
\end{keyword}

\end{frontmatter}



\section{Introduction and preliminaries}
\label{sec:Introduction-and-preliminaries}

Throughout the paper ${(\Kset,|\cdot|)}$ denotes a complete normed field 
and $\Kset[z]$ denotes the ring of polynomials over $\Kset$. 
The vector space $\Kset^n$ is endowed with the norm
${\|x\|_p = \left( \sum_{i=1}^n |x_i|^p \right)^{1/p}}$ for some ${1 \le p \le \infty}$, 
and the cone norm 
\[
{\|x\| = (|x_1|,\ldots,|x_n|)} 
\]
with values in $\Rset^n$. 
The vector space $\Rset^n$ is endowed with the standard coordinatewise orderings $\preceq$ and $\prec$
(see \cite[Example~5.1]{Pro13}). 

The present paper deals with the semilocal convergence of iterative methods for simultaneously finding all zeros of a polynomial
\begin{equation} \label{eq:polynomial-f}
	f(z) = C_0 z^n + C_1 z^{n-1} + \cdots + C_n
\end{equation}
in $\Kset[z]$ of degree ${n \ge 2}$. 
We define in $\Kset^n$ the \emph{vector of coefficients} of $f$ by 
\[
C_f = (C_1 / C_0,\ldots,C_n / C_0).
\]
A vector ${\xi \in \Kset^n}$ is called a \emph{root vector} of $f$ if $f$ can be presented in the form
\begin{equation} \label{eq:root-vector}
	f(z) = C_0 \prod_{i=1}^n (z - \xi_i).
\end{equation}
Obviously, a polynomial $f$ has a root vector ${\xi \in \Kset^n}$ if and only if it splits in $\Kset$.
Let ${V \colon \Kset^n \to \Kset^n}$ be the \emph{Vi\`ete function} defined by
${x \mapsto V(x)}$, where
$V(x)$ is the vector of coefficients of the monic polynomial ${\prod_{i=1}^n (z - x_i)}$.
In other words, the components of the vector ${V(x) \in \Kset^n}$ are the elementary symmetric polynomials defined by
\[
	V_i(x) = (-1)^i \sum_{1 \le j_1 < \ldots < j_i \le n}{x_{j_1} \ldots x_{j_i}} \qquad (i = 1,\ldots,n).
\]
It is well known that a vector ${\xi \in \Kset^n}$ is a root vector of $f$ if and only if it is a solution of Vi\`ete's system
\begin{equation} \label{eq:Viete-system}
V(x) = C_f \, .
\end{equation}


In 1891, taking into account this simple fact, Weierstrass \cite{Wei91} introduced and studied the first simultaneous method 
for polynomial zeros.
He provided a semilocal convergence analysis of his method without assuming that the system \eqref{eq:Viete-system} has a solution.
The \emph{Weierstrass method} is defined by 
\begin{equation}  \label{eq:Weierstrass-iteration}
x^{(k + 1)}  = x^{(k)} - W_f(x^{(k)}), \qquad k = 0,1,2,\ldots.
\end{equation}
Here and throughout, the Weierstrass correction ${W_f \colon \mathcal{D} \subset \Kset^n \to \Kset^n}$ is defined by
\begin{equation} \label{eq:Weierstrass-correction}
W_f(x) = (W_1(x),\ldots,W_n(x)) \quad\text{with}\quad
W_i(x) = \frac{f(x_i)}{C_0 \prod_{j \ne i} (x_i  - x_j)}
\quad (i = 1,\ldots,n),
\end{equation}
where $\mathcal{D}$ is the set of all vectors in $\Kset^n$ with pairwise distinct components.
Kerner \cite{Ker66} has proved that Weierstrass's method coincides with Newton's method 
in $\Kset^n$,
\begin{equation} \label{eq:Newton-iteration}
x^{(k+1)} = x^{(k)} - F'(x^{(k)})^{-1} F(x^{(k)}), \qquad k = 0,1,2,\ldots, 
\end{equation}
applied to Vi\`ete's system \eqref{eq:Viete-system}. In other words, Newton's method \eqref{eq:Newton-iteration} 
with ${F(x) = V(x) - C_f}$ and Weierstrass's method \eqref{eq:Weierstrass-iteration} are identical.
The detailed study of the convergence of the Weierstrass method can be found in Proinov \cite{Pro15a} and Proinov and Petkova \cite{PP14a}.


\medskip
In 1964, Dochev and Byrnev \cite{DB64} presented the second simultaneous method for polynomial zeros. 
The \emph{Dochev-Byrnev method} is defined by the following fixed point iteration: introduced
\begin{equation} \label{eq:Dochev-Byrnev-iteration}
	x^{(k + 1)} = \mathscr{F}(x^{(k)}), \qquad k = 0,1,2,\ldots,
\end{equation}
where the iteration function ${\mathscr{F} \colon \mathcal{D} \subset \Kset^n \to \Kset^n}$ is defined by
\begin{equation} \label{eq:Dochev-Byrnev-iteration-function}
\mathscr{F}(x) = (\mathscr{F}_1(x),\ldots,\mathscr{F}_n(x)) \quad\text{with}\quad
\mathscr{F}_i(x) = x_i  - \frac{f(x_i)}{g'(x_i)} \left( 2 - \frac{f'(x_i)}{g'(x_i)} + 
\frac{1}{2} \frac{f(x_i)}{g'(x_i)} \frac{g''(x_i)}{g'(x_i)} \right)
\end{equation}
and the polynomial $g$ is defined by 
\begin{equation} \label{eq:polynomial-g}
	g(z) = C_0 \prod_{i=1}^n (z - x_i).
\end{equation}
The local convergence of Dochev-Byrnev method \eqref{eq:Dochev-Byrnev-iteration} was studied by  
Semerdzhiev and Pateva \cite{SP77} and 
Kyurkchiev \cite{Kyu82} (see also \cite[Theorem~9.16]{SAK94}).


\medskip
In 1967, Ehrlich \cite{Ehr67} introduced and studied the third simultaneous method for polynomial zeros.
The \emph{Ehrlich method} is defined by the following fixed point iteration:
\begin{equation} \label{eq:Ehrlich-iteration}
	x^{(k + 1)} = \Phi(x^{(k)}), \qquad k = 0,1,2,\ldots,
\end{equation}
where the iteration function ${\Phi \colon \mathscr{D} \subset \Kset^n \to \Kset^n}$ is defined by
\begin{eqnarray} \label{eq:Ehrlich-iteration-function}
\Phi(x) & = & (\Phi_1(x),\ldots,\Phi_n(x)) \,\, \text{ with} \nonumber\\
\Phi_i(x) & = & x_i - \frac{f(x_i)}{f'(x_i) - f(x_i) \displaystyle\sum_{j \neq i}{\frac{1}{x_i - x_j}}} = 
            x_i  - \frac{W_i(x)}{1 + \displaystyle\sum_{j \ne i} \frac{W_j(x)}{x_i - x_j}} \, .
\end{eqnarray}
The first part of formula \eqref{eq:Ehrlich-iteration-function} is due to Ehrlich \cite{Ehr67} and 
the second one is due to B\"orsch-Supan \cite{Bor70}. 
To prove that the two parts of \eqref{eq:Ehrlich-iteration-function} are equal it is sufficient to substitute the derivative $f'(x_i)$ 
in \eqref{eq:Ehrlich-iteration-function} by
\begin{equation} \label{eq:f-derivative}
	f'(x_i) = \left(1 + \sum_{j \ne i} {\frac{W_i(x)}{x_i - x_j}} + \sum_{j \ne i} {\frac{W_j(x)}{x_i - x_j}} \right) 
	\prod_{j \ne i} (x_i - x_j).
\end{equation}
The equality \eqref{eq:f-derivative} can be found in Proinov and Cholakov \cite{PC14a}. Note that it holds for every 
monic polynomial ${f \in \Kset[z]}$ of degree ${n \ge 2}$, every vector ${x \in \mathcal{D}}$ and every 
${i \in I_n = \{1,2.\ldots,n\}}$. 
Obviously, the domain $\mathscr{D}$ of $\Phi$ is the set
\begin{eqnarray} \label{eq:Ehrlich-iteration-function-domain}
\mathscr{D} & = & 
\left\{ x \in \mathcal{D} : f'(x_i) - f(x_i) \sum_{j \neq i}{\frac{1}{x_i - x_j}} \ne 0 \, \text{ for all } \, i \in I_n \right\} \nonumber\\
 & = & \left\{ x \in \mathcal{D} : 1 + \sum_{j \ne i} \frac{W_j(x)}{x_i - x_j} \ne 0 \, \text{ for all } \, i \in I_n \right\}.
\end{eqnarray}
The detailed study of the local convergence of Ehrlich's method can be found in Proinov \cite{Pro15c}.
The semilocal convergence of Ehrlich's method was studied by 
Petkovi\'c \cite{Pet96}, 
Petkovi\'c and Ili\'c \cite{PI97},
Petkovi\'c and Herceg \cite{PH97,PH01}, 
Zheng and Huang \cite{ZH00} and
Proinov \cite{Pro06c}. 


\medskip
It is well known that ${(1 + t)^{-1} = 1 - t + o(t)}$ as ${t \to 0}$. 
Therefore, if $x \in \mathcal{D}$ is reasonably close to a root vector of $f$, then the following approximation is valid:
\begin{equation} \label{eq:approximation}
  \left(1 + \displaystyle\sum_{j \ne i} \frac{W_j(x)}{x_i - x_j}\right)^{-1} \approx 
	1 - \displaystyle\sum_{j \ne i} \frac{W_j(x)}{x_i - x_j} \, .
\end{equation}
Substituting \eqref{eq:approximation} in Ehrlich iteration function \eqref{eq:Ehrlich-iteration-function}, 
we obtain the following iterative method:
\begin{equation} \label{eq:Tanabe-iteration}
	x^{(k + 1)} = \mathscr{T}(x^{(k)}), \qquad k = 0,1,2,\ldots,
\end{equation}
where the iteration function ${\mathscr{T} \colon \mathcal{D} \subset \Kset^n \to \Kset^n}$ is defined by
\begin{equation} \label{eq:Tanabe-iteration-function}
\mathscr{T}(x) = (\mathscr{T}_1(x),\ldots,\mathscr{T}_n(x)) \quad\text{with}\quad
\mathscr{T}_i(x) = x_i  - W_i(x) \left( 1 - \sum_{j \ne i} \frac{W_j(x)}{x_i - x_j} \right).
\end{equation}
The method \eqref{eq:Tanabe-iteration} has been derived in 1972 by Pre{\v s}i\'c \cite{Pre72}. 
Two years later, Milovanovi\'c \cite{Mil74} gave an elegant derivation of this method. 
In 1983, the method \eqref{eq:Tanabe-iteration} was rediscovered by Tanabe \cite{Tan83}.
In the literature it is most frequently referred to as Tanabe's method.
In this paper, we refer to the method \eqref{eq:Tanabe-iteration} as the \emph{Pre{\v s}i\'c-Tanabe method}.
In 1996, Kanno, Kjurkchiev and Yamamoto \cite{KKY96} have proved that Pre{\v s}i\'c-Tanabe's method 
coincides with Chebyshev's method in $\Kset^n$, 
\begin{equation} \label{eq:Chebyshev-iteration}
x^{(k+1)} = x^{(k)} - \left[ I + \frac{1}{2} F'(x^{(k)})^{-1} F''(x^{(k)}) F'(x^{(k)})^{-1} F(x^{(k)}) \right] F'(x^{(k)})^{-1} F(x^{(k)}) 
\end{equation}
applied to Vi\`ete's system \eqref{eq:Viete-system} 
This means that Chebyshev's method \eqref{eq:Chebyshev-iteration} with ${F(x) = V(x) - C_f}$ 
is identical to Pre{\v s}i\'c-Tanabe's method \eqref{eq:Tanabe-iteration}.
For local convergence of Pre{\v s}i\'c-Tanabe's method, we refer to Toseva, Kyurkchiev and Iliev \cite{TKI11}. 
Semilocal convergence result of Pre{\v s}i\'c-Tanabe's method can be found in 
Petkovi\'c, Herceg and Ili\'c \cite{PHI97,PHI98} and
Ili\'c and Herceg \cite{IH98}. 

In this paper, we prove a general semilocal convergence result (Theorem~\ref{thm:general-semilocal-convergence-theorem-SM}) 
for simultaneous methods for polynomial zeros. 
Applying this result, we obtain new semilocal convergence theorems for Ehrlich's method 
(Theorem~\ref{thm:Ehrlich-semilocal})
and Dochev-Byrnev's method 
(Theorem~\ref{thm:Tanabe-semilocal}).
We also prove that Dochev-Byrnev's and Pre{\v s}i\'c-Tanabe's methods are identical 
(Theorem~\ref{thm:Dochev-Byrnev-Tanabe-equivalence}).
 
We note that the semilocal convergence of Ehrlich's and Dochev-Byrnev's methods can also be studied via local convergence results of these methods (see Proinov \cite{Pro15b} and Proinov and Vasileva \cite{PV15b}).
We will continue this topic in the future.

%
%

\section{A general semilocal convergence theorem for simultaneous methods}
\label{A-general-semilocal-theorem-for-simultaneous-methods}

In this section, we propose a semilocal convergence theorem for a class of iterative methods for simultaneous computation of all zeros of a polynomial. This result is based on our previous works \cite{Pro10,Pro15a}, where we presented semilocal convergence theorems for a large class of iterative methods in metric spaces.

In the sequel, for two vectors ${x \in \Kset^n}$ and ${y \in \Rset^n}$, we define in ${\Rset^n}$ the vector
\[
\frac{x}{y} = \left( \frac{|x_1|}{y_1},\ldots,\frac{|x_n|}{y_n} \right),
\]
provided that $y$ has no zero components. 
Also, we use the function ${d \colon \Kset^n \to \Rset^n}$ defined by 
\[
d(x) = (d_1(x),\ldots,d_n(x)) \quad\text{with}\quad 
d_i(x) = \min_{j \ne i} |x_i - x_j| \quad (i = 1,\ldots,n).
\]

Let ${f \in \Kset[z]}$ be a  polynomial of degree ${n \ge 2}$, and let ${T \colon D \subset \Kset^n \to \Kset^n}$ be an iteration function. 
We study the convergence of the iterative method
\begin{equation} \label{eq:simultaneous-method}
	x^{(k + 1)} = T(x^{(k)}), \qquad k = 0,1,2,\ldots,
\end{equation}
with respect to the function of initial conditions ${E_f \colon \mathcal{D} \to \Rset_+}$ defined by
\begin{equation} \label{eq:FIC3-SM}
E_f(x) = \left \| \frac{W_f(x)}{d(x)} \right \|_p \qquad (1 \le p \le \infty)
\end{equation}
and the convergence function ${F_f \colon \mathcal{D} \to \Rset_+^n}$ defined by
\begin{equation} \label{eq:CF}
F_f(x) =  \| W_f(x)\|,
\end{equation}
where ${W_f \colon \mathcal{D} \to \Kset^n}$ is the Weierstrass correction of $f$ defined by \eqref{eq:Weierstrass-correction}.

Throughout the paper, we denote by $J$ an interval on $\Rset_+$ containing $0$. 
For a given function ${\gamma \colon J \to \Rset_+}$, we define the functions ${\psi, \mu \colon J \to \Rset_+}$ by
\begin{equation} \label{eq:main-psi-mu}
\psi(t) = 1 - b t \gamma(t)  \quad\text{and}\quad \mu(t) = 1 - t \gamma(t),
\end{equation}
where ${b = 2^{1/q}}$. Here and throughout, for a given $p$ such that ${1 \le p \le \infty}$, 
we denote by $q$ the conjugate exponent of $p$, i.e. $q$ is defined by means of
\[
1 \le q \le \infty \quad\text{and}\quad 1/p + 1/q = 1.
\] 

For the sake of simplicity, throughout this section we use the following notations:
\begin{equation} \label{eq:sigma}
\sigma_i(x) = \sum_{j \ne i} {\frac{W_j(x)}{x_i - x_j}} \quad\text{and} \quad
\hat{\sigma}_i(x) = \sum_{j \ne i} {\frac{W_j(x)}{\hat{x}_i - x_j}} \, , 
\end{equation}
where ${x \in \mathcal{D}}$, ${\hat{x} = T(x)}$ and ${i \in I_n}$. 
It follows from the triangle inequality in $\Kset$, the definition of $d(x)$ and H\"older's inequality that
\begin{equation} \label{eq:sigma-estimate}
|\sigma_i(x)|  
\le  \sum_{j \ne i} \frac{|W_j(x)|}{|x_i - x_j|} 
\le  \sum_{j \ne i} \frac{|W_j(x)|}{d_j(x)} \le a E_f(x),
\end{equation}
where ${a = (n - 1)^{1/q}}$.
 
Before we state the main result of this section (Theorem~\ref{thm:general-semilocal-convergence-theorem-SM}), 
we first state a general theorem for iteration functions in $\Kset^n$. 
The purpose of this theorem is two-fold: 
(1) to be used in the proof of Theorem~\ref{thm:general-semilocal-convergence-theorem-SM}, and 
(2) to provide auxiliary results that can be used in the application of Theorem~\ref{thm:general-semilocal-convergence-theorem-SM}.

\begin{thm} \label{thm:iteration-function-properties}
Let ${f \in \Kset[z]}$ be a  polynomial of degree ${n \ge 2}$, ${T \colon D \subset \Kset^n \to \Kset^n}$ be an iteration function, 
and let ${1 \le p \le \infty}$.
Suppose ${J \subset \Rset_+}$ and ${x \in D}$ is a vector with distinct components such that ${E_f(x) \in J}$ and
\begin{equation} \label{eq:main-condition-gamma}
\|x - T(x)\| \preceq \gamma(E_f(x)) \, \|W_f(x)\|,
\end{equation}
where ${\gamma \colon J \to \Rset_+}$ is such that ${\psi \colon J \to \Rset_+}$ defined by \eqref{eq:main-psi-mu} 
is a positive function.
Then: 
\begin{enumerate}[(i)]
	\item $T(x) \in \mathcal{D}$;
	\item $|\hat{x}_i - \hat{x}_j| \ge \psi(E_f(x)) \, d_i(x)  > 0$, where $\hat{x}_i = T_i(x)$; 
	\item $|\hat{x}_i - x_j| \ge \mu(E_f(x)) \, d_i(x) > 0$, where ${\mu \colon J \to \Rset_+}$ is defined by \eqref{eq:main-psi-mu}; 
	\item $d(T(x)) \succeq \psi(E_f(x)) \, d(x) \succ 0$; 
	\item $\displaystyle|\hat{\sigma}_i(x)| \le \frac{a E_f(x)}{\mu(E_f(x))}$, where ${a = (n - 1)^{1/q}}$; 
	\item $\displaystyle|\sigma_i(x) - \hat{\sigma}_i(x)| \le \frac{a \gamma(E_f(x)) E_f(x)^2}{\mu(E_f(x))}$; 
	\item $\displaystyle\left|\prod_{j \ne i} \frac{\hat{x}_i - x_j}{\hat{x}_i - \hat{x}_j}\right| \le 
	\left( 1 + \frac{a E_f(x) \, \gamma(E_f(x))}{(n - 1) \psi(E_f(x))}\right)^{n-1}$. 
	\item If \, $\|W_f(T(x))\| \preceq \beta(E_f(x)) \, \|W_f(x)\|$, where ${\beta \colon J \to [0,1)}$, then
\begin{equation} \label{eq:FIC}
E_f(T(x)) \le \varphi(E_f(x)),
\end{equation}
where the function ${\varphi \colon J \to \Rset_+}$ is defined by ${\varphi(t) = t \beta(t) / \psi(t)}$.	
\end{enumerate}
\end{thm}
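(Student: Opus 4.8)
The plan is to reduce all eight items to the two ``master'' lower bounds
\[
|\hat{x}_i - \hat{x}_j| \ge \psi\bigl(E_f(x)\bigr)\,|x_i - x_j|
\qquad\text{and}\qquad
|\hat{x}_i - x_j| \ge \mu\bigl(E_f(x)\bigr)\,|x_i - x_j|
\]
for all distinct ${i,j \in I_n}$, where $\hat{x} = T(x)$ and $\hat{x}_i = T_i(x)$. The point of keeping $|x_i - x_j|$ (rather than $d_i(x)$) on the right at this stage is that items (ii)--(iv) will use the specialization ${|x_i - x_j| \ge d_i(x)}$ while items (v)--(vii) need ${|x_i - x_j| \ge d_j(x)}$ so that the sums ${\sum_{j \ne i}|W_j(x)|/d_j(x)}$ reappear and the estimate \eqref{eq:sigma-estimate} can be reused.

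To prove the first master bound I would start from ${|\hat{x}_i - \hat{x}_j| \ge |x_i - x_j| - |x_i - \hat{x}_i| - |x_j - \hat{x}_j|}$ and apply \eqref{eq:main-condition-gamma} componentwise, which reduces matters to bounding ${|W_i(x)| + |W_j(x)|}$. Writing ${|W_i(x)| = \bigl(|W_i(x)|/d_i(x)\bigr)\,d_i(x)}$, using ${d_i(x),\,d_j(x) \le |x_i - x_j|}$, and applying the two-term H\"older inequality ${u+v \le 2^{1/q}(u^p+v^p)^{1/p}}$ to the pair ${|W_i(x)|/d_i(x)}$, ${|W_j(x)|/d_j(x)}$ — whose $\ell^p$-norm is at most $E_f(x)$ — gives ${|W_i(x)|+|W_j(x)| \le b\,E_f(x)\,|x_i - x_j|}$, which is exactly where ${b = 2^{1/q}}$ enters. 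The second master bound is simpler: drop $|x_j - \hat{x}_j|$ and use ${|W_i(x)| \le E_f(x)\,d_i(x) \le E_f(x)\,|x_i - x_j|}$, which follows from the definition of $E_f$. Since $\psi$ is assumed positive and ${2^{1/q} \ge 1}$ forces ${\mu \ge \psi > 0}$, both right-hand sides are strictly positive.

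From here the rest is bookkeeping. Item (ii) is the first master bound with ${|x_i - x_j|}$ replaced by $d_i(x)$; (i) follows since $T(x)$ then has pairwise distinct components; (iii) is the second master bound with ${|x_i - x_j|}$ replaced by $d_i(x)$; (iv) follows from (ii) by taking the minimum over ${j \ne i}$. For (v) I would write ${|\hat{\sigma}_i(x)| \le \sum_{j \ne i}|W_j(x)|/|\hat{x}_i - x_j|}$, use the second master bound in the form ${|\hat{x}_i - x_j| \ge \mu(E_f(x))\,d_j(x)}$, and then invoke ${\sum_{j \ne i}|W_j(x)|/d_j(x) \le a\,E_f(x)}$ from \eqref{eq:sigma-estimate}. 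For (vi) I would use ${\tfrac{1}{x_i - x_j} - \tfrac{1}{\hat{x}_i - x_j} = \tfrac{\hat{x}_i - x_i}{(x_i - x_j)(\hat{x}_i - x_j)}}$, bound ${|\hat{x}_i - x_i| \le \gamma(E_f(x))E_f(x)\,d_i(x)}$, ${|x_i - x_j| \ge d_j(x)}$, ${|\hat{x}_i - x_j| \ge \mu(E_f(x))\,d_i(x)}$, and again apply \eqref{eq:sigma-estimate}.

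For (vii) I would use ${\tfrac{\hat{x}_i - x_j}{\hat{x}_i - \hat{x}_j} = 1 + \tfrac{\hat{x}_j - x_j}{\hat{x}_i - \hat{x}_j}}$, bound the second summand by ${\tfrac{\gamma(E_f(x))}{\psi(E_f(x))}\cdot\tfrac{|W_j(x)|}{d_j(x)}}$ using \eqref{eq:main-condition-gamma} and the first master bound (in the form ${|\hat{x}_i - \hat{x}_j| \ge \psi(E_f(x))\,d_j(x)}$), and then apply the AM--GM inequality ${\prod_{j \ne i} a_j \le \bigl(\tfrac{1}{n-1}\sum_{j \ne i} a_j\bigr)^{n-1}}$ followed once more by \eqref{eq:sigma-estimate}; this produces exactly the stated bound. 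Finally, (viii) is immediate: combining ${|W_i(T(x))| \le \beta(E_f(x))|W_i(x)|}$ with ${d_i(T(x)) \ge \psi(E_f(x))\,d_i(x)}$ from (iv) gives ${|W_i(T(x))|/d_i(T(x)) \le \tfrac{\beta(E_f(x))}{\psi(E_f(x))}\,|W_i(x)|/d_i(x)}$ for every $i$, and taking $\|\cdot\|_p$-norms yields ${E_f(T(x)) \le \varphi(E_f(x))}$. I expect the only genuinely delicate point to be the asymmetric roles of $d_i(x)$ and $d_j(x)$: handling it correctly is precisely why one proves the master bounds with $|x_i - x_j|$ on the right first and specializes afterwards; everything else is routine use of the triangle inequality, H\"older's inequality, and \eqref{eq:sigma-estimate}.
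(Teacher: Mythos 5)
Your proof is correct, and it reaches the same estimates as the paper, but by a partly different and more self-contained route. Where the paper obtains the two key geometric inequalities (ii)--(iii) by invoking Proposition~5.1 of \cite{Pro15a} (which directly gives $|\hat{x}_i - \hat{x}_j| \ge (1 - b\|(x - T(x))/d(x)\|_p)\,d_i(x)$ and $|\hat{x}_i - x_j| \ge (1 - \|(x - T(x))/d(x)\|_p)\,d_j(x)$) and obtains (vii) from Proposition~5.5 of \cite{Pro15a}, you prove everything from scratch: your two ``master'' bounds with $|x_i - x_j|$ on the right follow from the triangle inequality, \eqref{eq:main-condition-gamma} read componentwise, the two-term H\"older bound $u+v \le 2^{1/q}(u^p+v^p)^{1/p}$ (which is exactly where $b$ enters, matching \eqref{eq:main-psi-mu}), and $|W_i(x)| \le E_f(x)\,d_i(x)$; and your AM--GM argument for (vii), combined with \eqref{eq:sigma-estimate}, reproduces the content of the cited Proposition~5.5, since bounding $\sum_{j\ne i}|u_j|$ by $a\,E_f(x)\gamma/\psi$ is the same as bounding $\|u\|_p$ and multiplying by $a$. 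Items (iv)--(vi) and (viii) are then handled exactly as in the paper (minimum over $j\ne i$, the identity $\sigma_i - \hat{\sigma}_i = \sum_{j\ne i} W_j(\hat{x}_i - x_i)/((x_i-x_j)(\hat{x}_i - x_j))$, and dividing the hypothesis of (viii) by (iv) before taking $p$-norms to get \eqref{eq:FIC}). A genuine merit of your formulation is that keeping $|x_i - x_j|$ on the right before specializing cleanly resolves the asymmetry between $d_i(x)$ and $d_j(x)$: the paper's cited inequality for $|\hat{x}_i - x_j|$ carries $d_j(x)$, while statement (iii) is phrased with $d_i(x)$ and the later estimates (v)--(vi) actually need the $d_j$ version; your master bound delivers both at once. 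The trade-off is only length: the paper buys brevity by outsourcing the two propositions, whereas your argument is elementary and self-contained.
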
 

\begin{proof}
Let $i \in I_n$ be fixed.
By Proposition~5.1 of \cite{Pro15a} with ${u = \hat{x}}$ and ${v = x}$, taking into account the definition of $d(x)$, 
we obtain for ${j \ne i}$ the inequalities:
\[
|\hat{x}_i - \hat{x}_j| \ge \left( 1 - b\left\| \frac{x - T(x)}{d(x)} \right\|_p \right) d_i(x) \quad\text{and}\quad
|\hat{x}_i - x_j| \ge \left( 1 - \left\| \frac{x - T(x)}{d(x)} \right\|_p \right) d_j(x).
\]
It follows from \eqref{eq:main-condition-gamma} that
\[
\left\| \frac{x - T(x)}{d(x)} \right\|_p \le E_f(x) \, \gamma(E_f(x)).
\]
Combining the last three inequalities, we obtain (ii) and (iii).
Note that (ii) implies (i).
Taking the minimum on both sides of (ii) over ${j \ne i}$, we get 
\[
{d_i(T(x)) \ge \psi(E_f(x)) d_i(x)} 
\]
which proves (iv).
It follows from the triangle inequality in $\Kset$, (iii) and H\"older's inequality that 
\begin{equation} \label{eq:sigma-hat-estimate}
|\hat{\sigma}_i(x)| 
\le  \sum_{j \ne i} \frac{|W_j(x)|}{|\hat{x} - x_j|} 
\le \frac{1}{\mu(E_f(x))} \sum_{j \ne i} \frac{|W_j(x)|}{d_j(x)} \le \frac{a E_f(x)}{\mu(E_f(x))}
\end{equation}
which proves (v).
For simplicity, we define the quantities $A_i(x)$ and $B_i(x)$ as follows
\begin{equation} \label{eq:A-B-definition}
A_i(x) = \sigma_i(x) - \hat{\sigma}_i(x) \quad\text{and}\quad 
B_i(x) = \prod_{j \ne i} {\frac{\hat{x}_i - x_j}{\hat{x}_i - \hat{x}_j}} \, . 
\end{equation}
From the triangle inequality, \eqref{eq:main-condition-gamma}, (iii), the definition of $d(x)$ and H\"older's inequality, we get
\begin{eqnarray} \label{eq:A-estimate}
&|A_i(x)|& = \left| \sum_{j \ne i} \frac{ W_j(x) (\hat{x}_i - x_i)}{(\hat{x}_i - x_j) (x_i - x_j)} \right|
        \le \sum_{j \ne i} \frac{|W_j(x)| \, |\hat{x}_i - x_i|}{|\hat{x}_i - x_j| \, |x_i - x_j|} \nonumber \\
&& \le \frac{\gamma(E_f(x)) \, |W_i(x)|}{\mu(E_f(x)) \, d_i(x)} \sum_{j \ne i} \frac{|W_j(x)|}{d_j(x)}
 \le \frac{a \, \gamma(E_f(x)) \, E_f(x)^2}{\mu(E_f(x))} 
\end{eqnarray}
which proves (vi).
It is easy to see that $B_i(x)$ can be written in the form
\[ 
	B_i(x) = \prod_{j \ne i} (1 + u_j), 
\]
where $u$ is a vector in $\Kset^{n - 1}$ with components ${u_j = (\hat{x}_j - x_j) / (\hat{x}_i - \hat{x}_j)}$ for ${j \ne i}$.
According to Proposition~5.5 of \cite{Pro15a}, we have the following estimate 
\begin{equation} \label{eq:B-estimate-u}
	|B_i(x)| \le \left( 1 + \frac{a \|u\|_p}{n - 1} \right)^{n - 1}.
\end{equation}
From \eqref{eq:main-condition-gamma} and (ii), we obtain
\[
	|u_j| = \frac{|\hat{x}_j - x_j|}{|\hat{x}_i - \hat{x}_j|} \le \frac{\gamma(E_f(x))}{\psi(E_f(x))} \, \frac{|W_j(x)|}{d_j(x)} \, .
\]
Taking the p-norm on both sides, we get
\begin{equation} \label{eq:u-estimate}
	\|u\|_p \le \frac{E_f(x) \, \gamma(E_f(x))}{\psi(E_f(x))} \, .
\end{equation}
Combining \eqref{eq:B-estimate-u} and \eqref{eq:u-estimate}, we deduce 
\begin{equation} \label{eq:B-estimate}
	|B_i(x)| \le \left( 1 + \frac{a E_f(x) \, \gamma(E_f(x))}{(n - 1) \psi(E_f(x))} \right)^{n-1}
\end{equation}
which proves (vii).
Now let 
\begin{equation} \label{eq:main-condition-beta}
\|W_f(T(x))\| \preceq \beta(E_f(x)) \, \|W_f(x)\|,
\end{equation}
where ${\beta \colon J \to [0,1)}$.
From \eqref{eq:main-condition-beta} and (iv), we deduce
\[
\left\| \frac{W_f(T(x))}{d(T(x))} \right\| \preceq
\frac{\beta(E_f(x))}{\psi(E_f(x))} \left\| \frac{W_f(x)}{d(x)} \right\|.
\]
Taking the $p$-norm on both sides, we obtain \eqref{eq:FIC}.
This completes the proof of the theorem.
\end{proof}

Now we are ready to state the main theorem of this paper. 
In this theorem, ${S_k(t)}$ stands for the polynomial
\(
S_k(t) = 1 + t + \cdots + t^{k - 1}
\)
if ${k \in \Nset}$, and ${S_0(t) \equiv 0}$.

\begin{thm} \label{thm:general-semilocal-convergence-theorem-SM}
Let ${(\Kset,|\cdot|)}$ be a complete normed field,
$f \in \Kset[z]$ be a polynomial of degree ${n \ge 2}$,
${T \colon D \subset \Kset^n \to \Kset^n}$ be an iteration function,  
${E_f \colon \mathcal{D} \to R_+}$ be defined by \eqref{eq:FIC3-SM}, and ${1 \le p \le \infty}$.
Suppose there exists ${\tau > 0}$ such that for any ${x \in \mathcal{D}}$ with ${E_f(x) < \tau}$, we have:
\begin{enumerate}[(a)]
\item $x \in D$,
\item $\|x - T(x)\| \preceq \gamma(E_f(x)) \, \|W_f(x)\|$,
\item $\|W_f(T(x))\| \preceq \beta(E_f(x)) \, \|W_f(x)\|$,
\end{enumerate}
where $\beta$ is a quasi-homogeneous function of degree ${m \ge 0}$ on ${[0,\tau)}$,
$\gamma$ is a nondecreasing positive function on ${[0,\tau)}$ such that
the function $\psi$ defined by \eqref{eq:main-psi-mu} is positive on ${[0,\tau)}$.
Let ${x^{(0)} \in \mathcal{D}}$ be such that
\begin{equation}
	E_f(x^{(0)}) < \tau \quad\text{and}\quad \phi(E_f(x^{(0)})) \le 1,
\end{equation}
where $\phi = \beta / \psi$.
Then the following statements hold true:
\begin{enumerate}[(i)]
\item 
\textsc{Convergence}. 
Starting from $x^{(0)}$, the Picard iteration \eqref{eq:simultaneous-method} is well-defined, remains in the closed ball 
${\overline{U}(x^{(0)},\rho)}$ and converges to a root vector $\xi$ of $f$, where 
\[
\rho =  \frac{\gamma(E_f(x^{(0)}))}{1 - \beta(E_f(x^{(0)}))} \, \| W_f(x^{(0)}) \|. 
\]
Besides, the convergence is of order ${r = m + 1}$ provided that ${\phi(E_f(x^{(0)})) < 1}$.
\item 
\textsc{A priori estimate}. For all $k \ge 0$ we have the following error estimate
\begin{equation}  \label{eq:semilocal-priori}
\left \| x^{(k)} - \xi \right \| \preceq A_k \, \frac{{\theta}^k {\lambda }^{S_k(r)}}
{1 - \theta {\lambda}^{r^k } } \left \| W_f(x^{(0)}) \right \| ,
\end{equation}
where $\lambda  = \phi(E_f(x^{(0)}))$, $\theta  = \psi(E_f(x^{(0)}))$ and
$A_k = \gamma(E_f(x^{(0)}) \lambda^{S_k(r)})$.
\item 
\textsc{First a posteriori estimate}. For all $k \ge 0$ we have the following error estimate
\begin{equation}  \label{eq:semilocal-posteriori-1}
\left\| x^{(k)} - \xi \right\| \preceq \frac{\gamma(E_f(x^{(k)}))}{1 - \beta(E_f(x^{(k)}))} \, \left\| W_f(x^{(k)}) \right\|.
\end{equation}
\item 
\textsc{Second a posteriori estimate}. For all $k \ge 0$ we have the following error estimate
\begin{equation}  \label{eq:semilocal-posteriori-2}
\left\| x^{(k + 1)} - \xi \right\| \preceq 
\frac{\theta_k \lambda_k}{1 - \theta_k (\lambda_k)^r} \, \gamma(E_f(x^{(k+1)})) \left\| W_f(x^{(k)}) \right\| ,
\end{equation}
where $\lambda_k = \phi(E_f(x^{(k)}))$ and  $\theta_k = \psi(E_f(x^{(k)}))$.
\item 
\textsc{Some other estimates}. For all ${k \ge 0}$ we have 
\begin{equation}  \label{eq:some-other-estimates}
\left\| W_f(x^{(k + 1)}) \right\| \preceq \theta \, \lambda^{r^k} \, \left\|W_f(x^{(k)}) \right\| \quad\text{and}\quad
\left\|W_f(x^{(k)}) \right\| \preceq \theta^k \, \lambda^{S_k(r)} \, \left\|W_f(x^{(0)}) \right\|.
\end{equation}
\item \textsc{Localization of the zeros}.
If ${E_f(x^{(0)}) < 1}$, then $f$ has $n$ simple zeros in $\Kset$ and for every ${k \ge 0}$ the closed disks
\begin{equation}  \label{eq:disks-disjoint}
D_i^k = \{z \in \Kset : |z-x_i^{(k)}| \le r_i^{(k)} \},
\quad i = 1,2,\ldots,n,
\end{equation}
where 
\[
r_i^{(k)} = \frac{\gamma(E_f(x^{(k)}))}{1 - \beta(E_f(x^{(k)}))} \, |W_i(x^{(k)})|,
\]
are mutually disjoint and each of them contains exactly one zero of $f$.	
\end{enumerate}
\end{thm}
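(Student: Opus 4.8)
The plan is to treat $E_f$ as a function of initial conditions and to derive the theorem from the abstract semilocal convergence machinery for iteration functions of \cite{Pro10,Pro15a}, with Theorem~\ref{thm:iteration-function-properties} supplying all the per-step estimates. First I would observe that for every $x\in\mathcal{D}$ with $E_f(x)<\tau$ the hypotheses (a)--(c) verify the assumptions of Theorem~\ref{thm:iteration-function-properties}: (a) gives $x\in D$, (b) is \eqref{eq:main-condition-gamma}, and (c) is \eqref{eq:main-condition-beta}; since $\gamma$ is positive and nondecreasing with $\psi$ positive on $[0,\tau)$, conclusion (viii) gives the contraction of the function of initial conditions
\[
E_f(T(x))\le\varphi(E_f(x)),\qquad \varphi(t)=t\,\beta(t)/\psi(t)=t\,\phi(t).
\]
A short computation using the quasi-homogeneity of $\beta$ of degree $m$ and the fact that $\psi$ is nonincreasing shows that $\varphi$ is quasi-homogeneous of degree $r=m+1$ on $[0,\tau)$; hence $\phi=\varphi(t)/t$ is nondecreasing there, and so is $\beta=\phi\,\psi$.

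\textbf{Convergence and estimates.} From $E_f(x^{(0)})<\tau$ and $\phi(E_f(x^{(0)}))\le1$, an induction --- using (a), part (i) of Theorem~\ref{thm:iteration-function-properties} for membership in $\mathcal{D}$, the displayed contraction, and monotonicity of $\phi$ --- shows that the Picard iteration \eqref{eq:simultaneous-method} is well defined, that $E_k:=E_f(x^{(k)})$ is nonincreasing with $E_k\le E_0\lambda^{S_k(r)}$ where $\lambda=\phi(E_0)\le1$, and that $\phi(E_k)\le1$ for all $k$. Since $m=r-1$, quasi-homogeneity of $\beta$ collapses $\beta(E_k)\le\theta\lambda^{r^k}$ with $\theta=\psi(E_0)<1$, which with (c) gives both inequalities in \eqref{eq:some-other-estimates} and, in particular, geometric decay of $\|W_f(x^{(k)})\|$; combined with (b) this makes $\{x^{(k)}\}$ Cauchy in the complete space $\Kset^n$, so it converges to some $\xi$. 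Summing $\|x^{(j)}-x^{(j+1)}\|\preceq\gamma(E_j)\|W_f(x^{(j)})\|$ from an arbitrary index $k$ and majorizing $\gamma(E_j)\le\gamma(E_k)$, $\beta(E_j)\le\beta(E_k)$ gives the radius $\rho$ in (i) and the first a posteriori estimate \eqref{eq:semilocal-posteriori-1}; majorizing instead $\gamma(E_j)\le A_k$ and using $S_j(r)=S_k(r)+r^kS_{j-k}(r)$ with $S_{j-k}(r)\ge j-k$ to sum the resulting geometric tail gives the a priori estimate \eqref{eq:semilocal-priori}; the same computation shifted one step, with $\beta(E_{k+1})\le\beta(\lambda_kE_k)\le\lambda_k^{m}\beta(E_k)=\theta_k\lambda_k^{r}$, gives \eqref{eq:semilocal-posteriori-2}. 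To identify $\xi$ as a root vector, iterate part (iv) of Theorem~\ref{thm:iteration-function-properties} to obtain $d(x^{(k)})\succeq\bigl(\prod_{j<k}\psi(E_j)\bigr)d(x^{(0)})$; as $1-\psi(E_j)=b\,E_j\gamma(E_j)\le b\,\gamma(E_0)\,E_j$ is summable, the infinite product is positive, hence $d(\xi)\succ0$, so $\xi\in\mathcal{D}$, $W_f$ is continuous at $\xi$, and $\|W_f(x^{(k)})\|\to0$ forces $W_f(\xi)=0$, i.e. $\xi$ solves Vi\`ete's system \eqref{eq:Viete-system}. When $\lambda<1$, the bound $E_k\le E_0\lambda^{S_k(r)}$ together with \eqref{eq:semilocal-priori} yields convergence of order $r$.

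\textbf{Localization, and the main obstacle.} Assume $E_f(x^{(0)})<1$. The root vector $\xi$ has distinct components, so $f=C_0\prod_i(z-\xi_i)$ exhibits $n$ simple zeros in $\Kset$, and \eqref{eq:semilocal-posteriori-1} read componentwise gives $|x_i^{(k)}-\xi_i|\le r_i^{(k)}$, so $\xi_i\in D_i^k$ and each disk contains at least one zero. For pairwise disjointness it suffices to prove $r_i^{(k)}+r_j^{(k)}<|x_i^{(k)}-x_j^{(k)}|$; bounding $d_i(x^{(k)}),d_j(x^{(k)})\le|x_i^{(k)}-x_j^{(k)}|$ and applying H\"older's inequality gives $|W_i(x^{(k)})|+|W_j(x^{(k)})|\le b\,E_k\,|x_i^{(k)}-x_j^{(k)}|$, which (using $1-\psi(E_k)=b\,E_k\gamma(E_k)$) reduces disjointness to $\beta(E_k)<\psi(E_k)$; then a counting argument --- $n$ disjoint disks, $n$ zeros, each disk containing at least one --- shows each contains exactly one. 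The step I expect to be genuinely delicate is exactly this separation of the inclusion disks: squeezing strict separation out of the single smallness hypothesis $E_f(x^{(0)})<1$ with the sharp constants; if a direct argument is not immediate one appeals instead to the Weierstrass-type localization lemma of \cite{Pro15a}, which is valid whenever $E_f(x)<1$ and delivers both the separation and a localization of the zeros. By contrast, parts (i)--(v) amount to a disciplined unwinding of Theorem~\ref{thm:iteration-function-properties} together with the elementary algebra of quasi-homogeneous gauge functions.
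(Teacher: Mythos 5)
Your reconstruction of parts (i)--(v) is essentially sound and, unlike the paper (which outsources the iteration analysis to Theorem~4.2 and Propositions~2.7, 8.1, 8.2, 8.4 of \cite{Pro15a}), you rederive the estimates by hand from Theorem~\ref{thm:iteration-function-properties}; the gauge-function bookkeeping ($E_k\le E_0\lambda^{S_k(r)}$, $\beta(E_k)\le\theta\lambda^{r^k}$, the geometric tail sums) is correct. The genuine gap is in your identification of the limit $\xi$ as a root vector. You argue $d(\xi)\succ 0$ from $d(x^{(k)})\succeq\bigl(\prod_{j<k}\psi(E_j)\bigr)d(x^{(0)})$ and summability of $E_j$, but that summability comes from $E_j\le E_0\lambda^{S_j(r)}$ and therefore needs $\lambda=\phi(E_f(x^{(0)}))<1$. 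The theorem only assumes $\phi(E_f(x^{(0)}))\le 1$, and in the boundary case $\lambda=1$ your bounds give no decay of $E_j$ at all, the infinite product may tend to $0$, and indeed the conclusion $d(\xi)\succ 0$ is not part of the theorem (a root vector may have repeated components, i.e.\ $f$ may have multiple zeros); then $W_f$ is not even defined at $\xi$, so ``$W_f(\xi)=0$ by continuity'' cannot be salvaged along these lines. Note that $\|W_f(x^{(k)})\|\to 0$ still holds when $\lambda=1$ (since $\theta<1$ for $E_f(x^{(0)})>0$), and this is exactly what the paper exploits: Proposition~8.2 of \cite{Pro15a} (a Lagrange-interpolation/limit argument) yields that $\xi$ is a root vector from $x^{(k)}\to\xi$ and $W_f(x^{(k)})\to 0$ alone, with no need for $\xi\in\mathcal{D}$. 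You need this (or an equivalent passage to the limit in the identity of Proposition~8.1) to cover the case $\phi(E_f(x^{(0)}))=1$. You also skip the degenerate case $E_f(x^{(0)})=0$, where $\theta=1$ and several of your denominators $1-\theta\lambda^{r^k}$ require separate (trivial) treatment, as the paper does.

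On part (vi) your suspicion is correct but unresolved: your reduction of disjointness to $\beta(E_k)<\psi(E_k)$, i.e.\ $\phi(E_k)<1$, is the right computation (it is Proposition~8.4 of \cite{Pro15a}), but $\phi(E_k)<1$ does not follow from the stated hypothesis $E_f(x^{(0)})<1$ together with $\phi(E_f(x^{(0)}))\le 1$. The paper's own Step~4 in fact proves (vi) under the condition $\phi(E_f(x^{(0)}))<1$ (consistent with part (v) of Theorems~\ref{thm:Ehrlich-semilocal} and \ref{thm:Tanabe-semilocal}), so the hypothesis printed in (vi) appears to be a slip; but your hedged appeal to an unspecified ``Weierstrass-type localization lemma valid whenever $E_f(x)<1$'' does not close the argument, because the disks here have the method-dependent radii $\gamma(E_f(x^{(k)}))\,|W_i(x^{(k)})|/(1-\beta(E_f(x^{(k)})))$, not the plain Weierstrass radii. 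As written, your proof establishes (vi) only under $\phi(E_f(x^{(0)}))<1$, and the root-vector claim in (i) only under the same strict inequality.
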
 

\begin{proof}
Without loss of generality we may assume that $f$ is monic.
The case ${E_f(x^{(0)}) = 0}$ is almost trivial. Indeed, in this case ${W_f(x^{(0)}) = 0}$. 
Hence, setting ${z = x^{(0)}}$ in Proposition 8.1 of \cite{Pro15a}, we obtain \eqref{eq:root-vector}
which means that ${x^{(0)}}$ is a root vector of $f$.
Using conditions (a) and (b), it can be proved by induction that ${x^{(k)} = x^{(0)}}$ for all ${k \ge 0}$. 
This proves that the theorem holds with ${\xi = x^{(0)}}$.

Now we shall consider the case ${E_f(x^{(0)}) > 0}$. 
In this case, we set ${R = E_f(x^{(0)})}$ and ${J = [0,R]}$.
We divide the proof into four steps.

\begin{step}
In this step, we consider some properties of the functions $\psi$, $\beta$, $\phi$ and $\varphi$, 
where $\varphi$ is defined by 
\(
{\varphi(t) = t \phi(t)}.
\)
The monotonicity and positivity of $\gamma$ imply that $\psi$ is decreasing on ${[0,\tau)}$ and ${\psi(t) \le 1}$ 
with equality if and only if ${t = 0}$.
Then $\phi$ is nondecreasing and nonnegative on ${[0,\tau)}$.  It is easy to see that for all ${t \in J}$,
\begin{equation}  \label{eq:phi-beta-inequalities}
0 \le \phi(t) \le 1, \quad \beta(t) \le \psi(t) \quad\text{and}\quad 0 \le \beta(t) < 1.
\end{equation}
To prove that ${\beta(t) < 1}$, it suffices to show that ${\beta(R) < 1}$.
From ${\beta(R) \le \psi(R) \le 1}$, we conclude that ${\beta(R) \le 1}$. 
The case ${\beta(R) = 1}$ is impossible since it implies ${\psi(R) = 1}$ which is a contradiction.
Consequently, ${\beta(t) < 1}$ on $J$. 
From \eqref{eq:phi-beta-inequalities} and the fact that $\beta$ is a quasi-homogeneous of degree ${m}$, we deduce that ${t \beta(t)}$  
is a strict gauge function of order $r$ on $J$, and that ${\varphi}$ is a gauge function of order $r$ on $J$. 

In particular, it follows from the properties of the considered functions that
\begin{equation} \label{eq:lambda-theta}
	0 \le \lambda \le 1, \quad 0 < \theta \le 1 \quad\text{and}\quad \lambda \, \theta < 1.
\end{equation}
\end{step}

\begin{step}
In this step we prove that the iteration \eqref{eq:simultaneous-method} 
converges to a point ${\xi \in \Kset^n}$ and that conclusions (i)-(v) hold, 
except the fact that $\xi$ is a root vector of $f$.
We prove this by applying Theorem~4.2 of \cite{Pro15a} to the iteration function ${T \colon D \subset \Kset^n \to \Kset^n}$.

Let ${x \in D}$ be such that ${{E_f(x) \in J}}$. 
It follows from (b), (c) and Theorem~\ref{thm:iteration-function-properties} that \eqref{eq:FIC} holds.
Hence, the function ${E_f \colon D \to \Rset_+}$ defined by \eqref{eq:FIC3-SM} is a function of initial conditions of $T$
with a gauge function $\varphi$ of order $r$ on the interval $J$.
It follows from (c) that the function ${F_f \colon D \to \Rset_+^n}$ defined by 
\eqref{eq:CF} is a convergence function of $T$ with control functions $\beta$ and $\gamma$. 

Now we shall prove that ${x^{(0)}}$ is an initial point of $T$.
It follows from (a) that ${x^{(0)} \in D}$.
According to Proposition~2.7 of \cite{Pro15a}, to prove that ${x^{(0)}}$ is an initial point of $T$,
it is sufficient to show that
\begin{equation} \label{eq:initial-point-test}
	x \in D \, \text{ and } \, E_f(x) \in J \, \text{ imply } \, T(x) \in D.
\end{equation}
From Theorem~\ref{thm:iteration-function-properties}(i), we get that ${T(x) \in \mathcal{D}}$.
From \eqref{eq:FIC} and ${E_f(x) \in J}$, taking into account that $\varphi$ is a gauge function of order ${r \ge 1}$ on $J$, 
we deduce that ${E_f(T(x)) \in J}$.
Thus we have both ${T(x) \in \mathcal{D}}$ and ${E_f(T(x)) \in J}$.
Then it follows from (a) that ${T(x) \in D}$ which proves \eqref{eq:initial-point-test}.
Hence, $x^{(0)}$ is an initial point of $T$.

Now it follows from Theorem~4.2 of \cite{Pro15a} that the Picard iteration \eqref{eq:simultaneous-method} is well-defined, lies in the ball ${\overline{U}(x^{(0)},\rho)}$ and converges to a vector $\xi \in \Kset^n$ with estimates 
\eqref{eq:semilocal-priori}, 
\eqref{eq:semilocal-posteriori-1},
\eqref{eq:semilocal-posteriori-2} and 
\eqref{eq:some-other-estimates}.
\end{step}

\begin{step}
In this step we prove that $\xi$ is a root vector of $f$.
In the previous step we have proved that the iterative sequence ${(x^{(k)})}$ defined by \eqref{eq:simultaneous-method} converges to $\xi$.
On the other hand, it follows from \eqref{eq:some-other-estimates} and \eqref{eq:lambda-theta} that the sequence ${(W_f(x^{(k)})})$ converges to the zero-vector in $\Kset^n$. 
By  Proposition~8.2 of \cite{Pro15a}, we obtain that $\xi$ is a root vector of $f$.
\end{step}

\begin{step}
In this step we prove claim (vi). 
Let ${\phi(E(x^{(0)})) < 1}$ and ${k \ge 0}$ be fixed. 
According to \eqref{eq:FIC}, ${E_f(x^{(k)}) \le E_f(x^{(0)})}$ which means that ${E_f(x^{(k)}) \in J}$.
Besides, ${\phi(E(x^{(0)})) < 1}$ implies that ${\phi(t) < 1}$ on $J$.
This yields ${\beta(t) < \psi(t)}$ for all ${t \in J}$, which can be written in the following equivalent form:
\[   
\beta(t) < 1 - b \, t \, \gamma(t) \quad\text{for all}\quad t \in J.
\]
Then it follows from Proposition~8.4 of \cite{Pro15a} that the disks \eqref{eq:disks-disjoint} are mutually disjoint. 
On the other hand, it follows from claim (iii) that each of these disks contains at least one zero of $f$.
Therefore, each of the disks contains exactly one zero of $f$.
This complete the proof of the theorem.
\qedhere
\end{step} 
\end{proof}

%
%

\section{Semilocal convergence of Ehrlich's method}
\label{sec:Semilocal-convergence-of-Ehrlich-method}

In this section, we obtain a new semilocal convergence theorem with error estimates for Ehrlich method \eqref{eq:Ehrlich-iteration}. 
Our theorem generalizes and improves the results of 
Petkovi\'c \cite{Pet96}, 
Petkovi\'c and Ili\'c \cite{PI97},
Petkovi\'c and Herceg \cite{PH97}, 
Zheng and Huang \cite{ZH00}, 
Petkovi\'c and Herceg \cite{PH01} and 
Proinov \cite{Pro06c}. 
This section can be considered as a continuation of our paper \cite{Pro15c}, where we provide a detailed local convergence 
analysis of the Ehrlich method \eqref{eq:Ehrlich-iteration}. 

\medskip
In this section, we continue to use the notations $\sigma_i(x)$ and $\hat{\sigma}_i(x)$ defined by \eqref{eq:sigma}, 
but now ${\hat{x} = \Phi(x)}$.

\begin{lem} \label{lem:Ehrlich-1}
Let ${f \in \Kset[z]}$ be a  polynomial of degree ${n \ge 2}$, ${1 \le p \le \infty}$ and ${x \in \mathcal{D}}$ 
be a vector such that
\begin{equation} \label{eq:Ehrlich-1-initial-condition}
E_f(x) < 1 / a,
\end{equation}
where ${a = (n - 1)^{1/q}}$.
Then ${x \in \mathscr{D}}$ and
\begin{equation} \label{eq:Ehrlich-1}
\|x - \Phi(x)\| \preceq \gamma(E_f(x)) \, \|W_f(x)\|,
\end{equation}
where the real function $\gamma$ is defined by
\begin{equation} \label{eq:gamma-Ehrlich}
\gamma(t) = 1 / (1 - a t) \, .
\end{equation}
\end{lem}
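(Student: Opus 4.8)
The plan is to use the Börsch-Supan form of the Ehrlich iteration function, namely the rightmost expression in \eqref{eq:Ehrlich-iteration-function}, which writes
\[
\Phi_i(x) = x_i - \frac{W_i(x)}{1 + \sigma_i(x)},
\]
with $\sigma_i(x)$ as in \eqref{eq:sigma}. The whole statement then reduces to two things: first, that the denominator $1 + \sigma_i(x)$ is nonzero for every $i \in I_n$ (which is exactly the condition defining the domain $\mathscr{D}$ in \eqref{eq:Ehrlich-iteration-function-domain}, so this gives $x \in \mathscr{D}$); and second, a lower bound on $|1 + \sigma_i(x)|$ that yields the componentwise estimate \eqref{eq:Ehrlich-1}.

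First I would invoke the estimate \eqref{eq:sigma-estimate}, which is already established in the paper: $|\sigma_i(x)| \le a\,E_f(x)$ with $a = (n-1)^{1/q}$. The initial condition \eqref{eq:Ehrlich-1-initial-condition}, $E_f(x) < 1/a$, gives $|\sigma_i(x)| \le a E_f(x) < 1$, hence by the reverse triangle inequality
\[
|1 + \sigma_i(x)| \ge 1 - |\sigma_i(x)| \ge 1 - a E_f(x) > 0.
\]
This simultaneously shows the denominator never vanishes — so $x \in \mathscr{D}$ — and gives the bound $|1 + \sigma_i(x)|^{-1} \le 1/(1 - a E_f(x)) = \gamma(E_f(x))$ with $\gamma$ as in \eqref{eq:gamma-Ehrlich}.

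Then I would simply write, for each $i$,
\[
|x_i - \Phi_i(x)| = \frac{|W_i(x)|}{|1 + \sigma_i(x)|} \le \gamma(E_f(x))\,|W_i(x)|,
\]
and since this holds for all $i \in I_n$ it is exactly the cone-norm inequality \eqref{eq:Ehrlich-1}, recalling that $\|y\| = (|y_1|,\dots,|y_n|)$ and the ordering $\preceq$ is coordinatewise. That completes the proof.

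There is essentially no obstacle here — everything hinges on the already-proved bound \eqref{eq:sigma-estimate}, and the only thing to be careful about is using the Börsch-Supan form rather than the original Ehrlich form, so that $W_i(x)$ and $\sigma_i(x)$ appear directly and the estimate on $\sigma_i$ can be applied verbatim. One should also note that $\gamma$ is positive and nondecreasing on $[0,1/a)$, which is the form in which it will later be fed into Theorem~\ref{thm:iteration-function-properties} and Theorem~\ref{thm:general-semilocal-convergence-theorem-SM}.
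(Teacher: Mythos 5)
Your proposal is correct and follows essentially the same route as the paper: bound $|\sigma_i(x)|\le a\,E_f(x)$ via \eqref{eq:sigma-estimate}, deduce $|1+\sigma_i(x)|\ge 1-aE_f(x)>0$ so that $x\in\mathscr{D}$ by \eqref{eq:Ehrlich-iteration-function-domain}, and then read off $|x_i-\Phi_i(x)|=|W_i(x)|/|1+\sigma_i(x)|\le\gamma(E_f(x))\,|W_i(x)|$ from the B\"orsch--Supan form in \eqref{eq:Ehrlich-iteration-function}. The only cosmetic difference is that the paper cites Theorem~\ref{thm:iteration-function-properties}(v) where the bound \eqref{eq:sigma-estimate} is what is actually used, exactly as you do.
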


\begin{proof}
From Theorem~\ref{thm:iteration-function-properties}(v) and \eqref{eq:Ehrlich-1-initial-condition}, we obtain
\begin{equation} \label{eq:sigma-estimate-2}
|1 + \sigma_i(x)| \ge 1 - |\sigma_i(x)| \ge 1 - a E_f(x) > 0.
\end{equation}
According to \eqref{eq:Ehrlich-iteration-function-domain} this means that ${x \in \mathscr{D}}$. 
From \eqref{eq:Ehrlich-iteration-function} and \eqref{eq:sigma-estimate-2}, we get
\[
|x_i - \Phi_i(x)| = \frac{|W_i(x)|}{| 1 + \sigma_i(x) |} \le \frac{|W_i(x)|}{1 - a E_f(x)} = \gamma(E_f(x)) \, |W_i(x)|
\]
which implies \eqref{eq:Ehrlich-1}.
\end{proof}
 
According to \eqref{eq:main-psi-mu}, we define the functions $\psi$ and $\mu$ as follows:
\begin{equation} \label{eq:psi-mu-Ehrlich}
\psi(t) = 1 - b t \gamma(t) = \frac{1 - (a + b) t}{1 - a t} \quad\text{and}\quad
\mu(t) = 1 - t \gamma(t) = \frac{1 - (a + 1) t}{1 - a t} \, ,
\end{equation}
where $\gamma$ is defined by \eqref{eq:gamma-Ehrlich}. Note that if ${0 \le t < 1 / (a + b)}$, then ${\psi(t) > 0}$.

\begin{lem} \label{lem:Ehrlich-2}
Let ${f \in \Kset[z]}$ be a  polynomial of degree ${n \ge 2}$, ${1 \le p \le \infty}$ and ${x \in \mathcal{D}}$ be a vector such that
\begin{equation} \label{eq:Ehrlich-2-initial-condition}
E_f(x) < 1 / (a + b).
\end{equation}
Then ${x \in \mathscr{D}}$, ${\Phi(x) \in \mathcal{D}}$ and for each ${i \in I_n \, }$,
\begin{equation} \label{eq:Ehrlich-2}
W_i(\hat{x}) = (\hat{x}_i - x_i) (\hat{\sigma}_i(x) - \sigma_i(x)) \prod_{j \ne i} {\frac{\hat{x}_i - x_j}{\hat{x}_i - \hat{x}_j}} \, ,
\end{equation}
where ${\hat{x} = \Phi(x)}$.
\end{lem}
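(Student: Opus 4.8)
The plan is to dispose of the two membership claims first, using Lemma~\ref{lem:Ehrlich-1} and Theorem~\ref{thm:iteration-function-properties}, and then to derive the identity \eqref{eq:Ehrlich-2} from the fundamental Weierstrass representation of $f$.

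\emph{Membership.} Since $b = 2^{1/q} \ge 1$, hypothesis \eqref{eq:Ehrlich-2-initial-condition} gives $E_f(x) < 1/(a+b) \le 1/a$, so Lemma~\ref{lem:Ehrlich-1} applies and yields at once $x \in \mathscr{D}$ together with the bound $\|x - \Phi(x)\| \preceq \gamma(E_f(x))\,\|W_f(x)\|$, where $\gamma$ is as in \eqref{eq:gamma-Ehrlich}. I would then take $J = [0,1/(a+b))$; on $J$ the function $\psi$ from \eqref{eq:psi-mu-Ehrlich} is positive, so Theorem~\ref{thm:iteration-function-properties} is applicable to the iteration function $T = \Phi$ at the point $x$. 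Part~(i) gives $\Phi(x) \in \mathcal{D}$, and part~(iii) gives $|\hat{x}_i - x_j| \ge \mu(E_f(x))\,d_i(x) > 0$ for all $j \ne i$, where $\mu$ from \eqref{eq:psi-mu-Ehrlich} is positive on $J$ because $b \ge 1$ forces $J \subseteq [0,1/(a+1))$. Together with $x \in \mathscr{D}$, these facts guarantee that none of the denominators in the computation below vanishes.

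\emph{The identity.} Normalizing $f$ to be monic (which affects neither $W_f$, $\Phi$, $E_f$ nor the sets $\mathcal{D}$, $\mathscr{D}$) and writing $g(z) = \prod_{i=1}^{n}(z - x_i)$, Lagrange interpolation of the polynomial $f - g$ (of degree $\le n-1$) at the nodes $x_1,\dots,x_n$ gives the well-known representation
\[
f(z) = \prod_{i=1}^{n}(z - x_i) + \sum_{k=1}^{n} W_k(x)\prod_{j \ne k}(z - x_j).
\]
Evaluating at $z = \hat{x}_i$, isolating the $k = i$ term, and factoring $\prod_{j \ne i}(\hat{x}_i - x_j)$ out of everything else, one obtains
\[
f(\hat{x}_i) = \prod_{j \ne i}(\hat{x}_i - x_j)\,\bigl[\,(\hat{x}_i - x_i)(1 + \hat{\sigma}_i(x)) + W_i(x)\,\bigr],
\]
with $\hat{\sigma}_i(x)$ as in \eqref{eq:sigma}; this rearrangement is legitimate by the previous paragraph. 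Dividing by $\prod_{j \ne i}(\hat{x}_i - \hat{x}_j)$ and using $W_i(\hat{x}) = f(\hat{x}_i)/\prod_{j \ne i}(\hat{x}_i - \hat{x}_j)$ from \eqref{eq:Weierstrass-correction} turns this into
\[
W_i(\hat{x}) = \bigl[\,(\hat{x}_i - x_i)(1 + \hat{\sigma}_i(x)) + W_i(x)\,\bigr]\prod_{j \ne i}\frac{\hat{x}_i - x_j}{\hat{x}_i - \hat{x}_j}.
\]

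\emph{Conclusion.} From the Ehrlich iteration function \eqref{eq:Ehrlich-iteration-function} we have $\hat{x}_i - x_i = -W_i(x)/(1 + \sigma_i(x))$, i.e. $W_i(x) = -(\hat{x}_i - x_i)(1 + \sigma_i(x))$; substituting this into the bracket above collapses it to $(\hat{x}_i - x_i)(\hat{\sigma}_i(x) - \sigma_i(x))$, which is exactly \eqref{eq:Ehrlich-2}. The main thing to watch — and essentially the only work — is the bookkeeping that every division is permitted, which is precisely what the Membership step secures; note in particular that the computation never divides by $\hat{x}_i - x_i$, so the degenerate case $W_i(x) = 0$ (where $\hat{x}_i = x_i$ and both sides of \eqref{eq:Ehrlich-2} vanish) needs no separate treatment. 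Beyond that, the argument is a short algebraic manipulation.
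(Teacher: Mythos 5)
Your proof is correct and follows essentially the same route as the paper: establish $x \in \mathscr{D}$ and $\Phi(x) \in \mathcal{D}$ via Lemma~\ref{lem:Ehrlich-1} and Theorem~\ref{thm:iteration-function-properties}(i), evaluate the Weierstrass--Lagrange representation of $f$ at $\hat{x}_i$, eliminate $W_i(x)$ using the Ehrlich iteration relation $W_i(x) = -(\hat{x}_i - x_i)(1+\sigma_i(x))$, and divide by $\prod_{j\ne i}(\hat{x}_i - \hat{x}_j)$. The only cosmetic difference is that you rederive the representation by Lagrange interpolation (the paper cites Proposition~8.1 of \cite{Pro15a}) and keep the bracket in a form that avoids the formal division by $\hat{x}_i - x_i$, a harmless refinement.
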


\begin{proof}
It follows from Lemma \ref{lem:Ehrlich-1} and Theorem~\ref{thm:iteration-function-properties}(i) 
that ${x \in \mathscr{D}}$ and ${\Phi(x) \in \mathcal{D}}$.   
Hence, both sides of \eqref{eq:Ehrlich-2} are well-defined. 
Now we shall prove formula \eqref{eq:Ehrlich-2}.
Without loss of generality, we may assume that $f$ is a monic polynomial.
Then by Proposition~8.1 of \cite{Pro15a} with ${z = \hat{x}_i}$, we obtain
\begin{equation} \label{eq:Lagrange-formula-Ehrlich}
f(\hat{x}_i) = (\hat{x}_i - x_i) \left( 1 + \frac{W_i(x)}{\hat{x}_i - x_i} + \hat{\sigma}_i(x) \right) \prod_{j \ne i}{(\hat{x}_i - x_j)}.
\end{equation}
It follows from the definition of $\hat{x}$ and \eqref{eq:Ehrlich-iteration-function} that 
\[
\hat{x}_i = x_i  - W_i(x) \left( 1 + \sigma_i(x) \right)^{-1}.
\]
This equality can be written in the form
\begin{equation} \label{eq:BS-IF-form}
\frac{W_i(x)}{\hat{x}_i - x_i} =  - 1 - \sigma_i(x).
\end{equation}
From \eqref{eq:Lagrange-formula-Ehrlich} and \eqref{eq:BS-IF-form}, we get
\[
f(\hat{x}_i) = (\hat{x}_i - x_i) (\hat{\sigma}_i(x) - \sigma_i(x)) \prod_{j \ne i}{(\hat{x}_i - x_j)}.
\]
Dividing both sides of this equality by ${\prod_{j \ne i}{(\hat{x}_i - \hat{x}_j)}}$, we get \eqref{eq:Ehrlich-2} which completes the proof. 
\end{proof}

\begin{lem} \label{lem:Ehrlich-3}
Under the assumptions of the previous lemma,
\begin{equation} \label{eq:Ehrlich-3}
\|W_f(\Phi(x))\| \preceq \beta(E_f(x)) \, \|W_f(x)\|
\end{equation}
where the real function $\beta$ is defined as follows
\begin{equation} \label{eq:beta-Ehrlich}
\beta(t) = \frac{a t^2}{(1 - a t) (1 - (a + 1) t)} \left (1 + \frac{a t}{(n - 1) (1 - (a + b) t)} \right )^{n - 1},
\end{equation}
where ${a = (n - 1)^{1/q}}$ and ${b = 2^{1/q}}$. 
\end{lem}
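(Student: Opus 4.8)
The plan is to bound each component $|W_i(\Phi(x))|$ using the factorization \eqref{eq:Ehrlich-2} from Lemma~\ref{lem:Ehrlich-2}, which is available since the hypothesis \eqref{eq:Ehrlich-2-initial-condition} implies $E_f(x) < 1/(a+b)$, hence also $E_f(x) < 1/a$ because $b \ge 1$. Writing $t = E_f(x)$ and $\hat x = \Phi(x)$, I would take absolute values in \eqref{eq:Ehrlich-2} and estimate each of the three factors separately: the term $|\hat x_i - x_i| = |x_i - \Phi_i(x)|$ is controlled by Lemma~\ref{lem:Ehrlich-1}, giving $|\hat x_i - x_i| \le \gamma(t)\,|W_i(x)|$ with $\gamma(t) = 1/(1-at)$; the difference $|\hat\sigma_i(x) - \sigma_i(x)| = |A_i(x)|$ is bounded by Theorem~\ref{thm:iteration-function-properties}(vi), namely $a\gamma(t)t^2/\mu(t)$; and the product $\bigl|\prod_{j\ne i}(\hat x_i - x_j)/(\hat x_i - \hat x_j)\bigr| = |B_i(x)|$ is bounded by Theorem~\ref{thm:iteration-function-properties}(vii), namely $\bigl(1 + aT\gamma(t)/((n-1)\psi(t))\bigr)^{n-1}$.

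Multiplying these three bounds together gives
\[
|W_i(\hat x)| \le \gamma(t)\,|W_i(x)| \cdot \frac{a\gamma(t)t^2}{\mu(t)} \cdot \left(1 + \frac{a t\,\gamma(t)}{(n-1)\psi(t)}\right)^{n-1}.
\]
Next I would substitute the explicit forms $\gamma(t) = 1/(1-at)$, $\mu(t) = (1-(a+1)t)/(1-at)$ and $\psi(t) = (1-(a+b)t)/(1-at)$ from \eqref{eq:gamma-Ehrlich} and \eqref{eq:psi-mu-Ehrlich} and simplify. The factor $\gamma(t)^2 t^2 a / \mu(t)$ becomes $a t^2 / \bigl((1-at)(1-(a+1)t)\bigr)$, and inside the power the quantity $a t\gamma(t)/((n-1)\psi(t))$ becomes $a t / \bigl((n-1)(1-(a+b)t)\bigr)$; this is exactly the function $\beta(t)$ defined in \eqref{eq:beta-Ehrlich}. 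Since this bound holds coordinatewise for every $i \in I_n$, it yields \eqref{eq:Ehrlich-3} in the cone-norm ordering $\preceq$.

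The only real subtlety is bookkeeping: I must verify that the hypotheses of the invoked parts of Theorem~\ref{thm:iteration-function-properties} are met, i.e.\ that the main condition \eqref{eq:main-condition-gamma} holds for $T = \Phi$ with the nondecreasing positive $\gamma$ of \eqref{eq:gamma-Ehrlich} and that $\psi$ is positive on the relevant interval. Both follow from Lemma~\ref{lem:Ehrlich-1} (which establishes \eqref{eq:Ehrlich-1}, the instance of \eqref{eq:main-condition-gamma}) together with the remark after \eqref{eq:psi-mu-Ehrlich} that $\psi(t) > 0$ for $0 \le t < 1/(a+b)$; since $E_f(x) < 1/(a+b)$ by \eqref{eq:Ehrlich-2-initial-condition}, everything applies with $J = [0, E_f(x)]$ or any interval containing $E_f(x)$ on which $\psi > 0$. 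The first assertions of the lemma ($x \in \mathscr{D}$, $\Phi(x) \in \mathcal{D}$) are already contained in Lemma~\ref{lem:Ehrlich-2}, so nothing further is needed there. I would also note in passing that all three auxiliary factors are nonnegative and the product expansion is valid because $1 - at$, $1-(a+1)t$ and $1-(a+b)t$ are all positive under \eqref{eq:Ehrlich-2-initial-condition}, so $\beta(t) \ge 0$ is well-defined; the stronger fact that $\beta$ maps into $[0,1)$ near $0$ is not needed here and would be addressed when applying Theorem~\ref{thm:general-semilocal-convergence-theorem-SM}. The computation is entirely routine algebra, so I expect no genuine obstacle beyond careful substitution.
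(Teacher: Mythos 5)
Your proposal is correct and follows essentially the same route as the paper's own proof: apply the factorization of Lemma~\ref{lem:Ehrlich-2}, bound the three factors by Lemma~\ref{lem:Ehrlich-1} and parts (vi), (vii) of Theorem~\ref{thm:iteration-function-properties}, and substitute the explicit $\gamma$, $\mu$, $\psi$ to recover $\beta$. The only blemish is the stray ``$aT\gamma(t)$'' (clearly meant to be $a t\,\gamma(t)$), which does not affect the argument.
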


\begin{proof}
Let $i \in I_n$ be fixed.
It follows from Lemma~\ref{lem:Ehrlich-2} that
\begin{equation} \label{eq:Ehrlich-3a}
	W_i(\Phi(x)) =  A_i(x) B_i(x) (\hat{x}_i - x_i) \, , 
\end{equation}
where ${A_i(x)}$ and ${B_i(x)}$ are defined by \eqref{eq:A-B-definition}.
From Theorem~\ref{thm:iteration-function-properties}(vi) and Theorem~\ref{thm:iteration-function-properties}(vii), we have
\begin{equation} \label{eq:A-B-estimates}
|A_i(x)| \le \frac{a \, \gamma(E_f(x)) \, E_f(x)^2}{\mu(E_f(x))} \,\, \text{ and } \,\,
|B_i(x)| \le \left( 1 + \frac{a E_(x) \, \gamma(E_(x))}{(n - 1) \psi(E_f(x))} \right)^{n-1},
\end{equation}
where $\psi$ and $\mu$ are defined by \eqref{eq:psi-mu-Ehrlich}. It follows from Lemma~\ref{lem:Ehrlich-1} that
\begin{equation} \label{eq:Ehrlich-1a}
|\hat{x}_i - x_i| \le \gamma(E_f(x)) \, |W_i(x)|.
\end{equation}
From \eqref{eq:Ehrlich-3a}, \eqref{eq:A-B-estimates} and \eqref{eq:Ehrlich-1a}, we get
\begin{equation} \label{eq:Ehrlich-4a}
|W_i(\Phi(x))| \le \beta(E_f(x)) \, |W_i(x)|
\end{equation}
which yields the inequality \eqref{eq:Ehrlich-3}. 
\end{proof}

We define the function ${\phi = \beta / \psi}$, where $\psi$ and $\beta$ are define by 
\eqref{eq:psi-mu-Ehrlich} and \eqref{eq:beta-Ehrlich}, respectively. It is easy to calculate that
\begin{equation} \label{eq:phi-Ehrlich}
\phi(t) = \frac{a t^2}{(1 - (a + 1) t) (1 - (a + b) t)} \left (1 + \frac{a t}{(n - 1) (1 - (a + b) t)} \right )^{n - 1}.
\end{equation}

Now we are ready to state the main result of this section which generalizes, improves and complements 
all previous results in this area.

\begin{thm} 
\label{thm:Ehrlich-semilocal}
Let ${(\Kset,|\cdot|)}$ be a complete normed field,
$f \in \Kset[z]$ be a polynomial of degree $n \ge 2$ and $1 \le p \le \infty$.
Suppose $x^{(0)} \in \Kset^n$ is an initial guess with distinct components satisfying
\begin{equation} \label{eq:Ehrlich-semilocal-initial-condition}
E_f(x^{(0)}) < 1 / (a + b) \quad\text{and}\quad \phi(E_f(x^{(0)})) \le 1,
\end{equation}
where the function $E_f$ is defined by \eqref{eq:FIC3-SM}, 
${a = (n - 1)^{1/q}}$, ${b = 2^{1/q}}$ and the function $\phi$ is defined by \eqref{eq:phi-Ehrlich}.
Then the following statements hold true:
\begin{enumerate}[(i)]
\item 
\textsc{Convergence}. Starting from $x^{(0)}$, Ehrlich iteration \eqref{eq:Ehrlich-iteration} is well-defined, remains in the closed ball 
${\overline{U}(x_0,\rho)}$ and converges to a root vector $\xi$ of $f$, where 
\begin{equation} \label{eq:inclusion-radius-Ehrlich}
	\rho =  \frac{\gamma(E_f(x^{(0)}))}{1 - \beta(E_f(x^{(0)}))} \, \| W_f(x^{(0)})\|
\end{equation}
and the functions $\beta$ and $\gamma$ are defined by \eqref{eq:beta-Ehrlich} and \eqref{eq:gamma-Ehrlich}, respectively.
Besides, the convergence is cubic provided that ${\phi(E_f(x^{(0)})) < 1}$.
\item 
\textsc{A priori estimate}. For all $k \ge 0$ we have the following error estimate
\begin{equation}  \label{eq:Ehrlich-semilocal-priori}
\left \| x^{(k)} - \xi \right \| \preceq A_k \, \frac{{\theta}^k {\lambda }^{(3^k - 1) / 2}}
{1 - \theta {\lambda} ^{3^k } } \left \| W_f(x^{(0)}) \right \| ,
\end{equation}
where $\lambda  = \phi(E_f(x^{(0)}))$, $\theta  = \psi(E_f(x^{(0)}))$,
$A_k = \gamma(E_f(x^{(0)}) \lambda^{(3^k - 1) / 2})$
and the function $\psi$ is defined by \eqref{eq:psi-mu-Ehrlich}.
\item 
\textsc{First a posteriori estimate}. For all $k \ge 0$ we have the following error estimate
\begin{equation}  \label{eq:Ehrlich-semilocal-posteriori-1}
\left\| x^{(k)} - \xi \right\| \preceq \frac{\gamma(E_f(x^{(k)}))}{1 - \beta(E_f(x^{(k)}))} \, \| W_f(x^{(k)})\|.
\end{equation}
\item 
\textsc{Second a posteriori estimate}. For all $k \ge 0$ we have the following error estimate
\begin{equation}  \label{eq:Ehrlich-semilocal-posteriori-2}
\left\| x^{(k + 1)} - \xi \right\| \preceq 
\frac{\theta_k \lambda_k}{1 - \theta_k (\lambda_k)^3} \, \gamma(E_f(x^{(k + 1)})) \left\| W_f(x^{(k)}) \right\| ,
\end{equation}
where $\lambda_k = \phi(E_f(x^{(k)}))$ and  $\theta_k = \psi(E_f(x^{(k)}))$.
\item \textsc{Localization of the zeros}.
If ${\phi(E_f(x^{(0)} )) < 1}$, then $f$ has $n$ simple zeros in $\Kset$ and for every
${k \ge 0}$ the closed disks
\begin{equation}  \label{eq:disks-disjoint-Ehrlich}
D_i^k = \{z \in \Kset : |z-x_i^{(k)}| \le r_i^{(k)} \},
\quad i = 1,2,\ldots,n,
\end{equation}
where 
\[
r_i^{(k)} = \frac{\gamma(E_f(x^{(k)}))}{1 - \beta(E_f(x^{(k)}))} \, |W_i(x^{(k)})|,
\]
are mutually disjoint and each of them contains exactly one zero of $f$.	
\end{enumerate}
\end{thm}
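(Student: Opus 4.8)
The plan is to obtain Theorem~\ref{thm:Ehrlich-semilocal} as a direct specialization of the general semilocal convergence theorem, Theorem~\ref{thm:general-semilocal-convergence-theorem-SM}, applied to the Ehrlich iteration function $T = \Phi$ with domain $D = \mathscr{D}$ as in \eqref{eq:Ehrlich-iteration-function-domain}, taking $\tau = 1/(a+b)$, the function $\gamma$ from \eqref{eq:gamma-Ehrlich}, and the function $\beta$ from \eqref{eq:beta-Ehrlich}. First I would check hypotheses (a)--(c) of Theorem~\ref{thm:general-semilocal-convergence-theorem-SM} for an arbitrary $x \in \mathcal{D}$ with $E_f(x) < \tau$. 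Since $b > 0$, the bound $E_f(x) < 1/(a+b)$ forces $E_f(x) < 1/a$, so Lemma~\ref{lem:Ehrlich-1} applies and gives simultaneously $x \in \mathscr{D}$ (hypothesis (a)) and the estimate \eqref{eq:Ehrlich-1}, which is hypothesis (b); hypothesis (c) is exactly Lemma~\ref{lem:Ehrlich-3}.

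Next I would verify the structural requirements on the control functions over the whole interval $[0,\tau)$: $\gamma(t) = 1/(1 - a t)$ is positive and nondecreasing there; the associated function $\psi(t) = (1 - (a+b)t)/(1 - a t)$ from \eqref{eq:psi-mu-Ehrlich} is positive on $[0,\tau)$; and $\beta$ from \eqref{eq:beta-Ehrlich}, being $t^{2}$ times a product of positive nondecreasing factors on $[0,\tau)$, is a quasi-homogeneous function of degree $m = 2$. Hence the order supplied by the general theorem is $r = m+1 = 3$. Since $\phi = \beta/\psi$ equals the function given explicitly by \eqref{eq:phi-Ehrlich}, the initial condition \eqref{eq:Ehrlich-semilocal-initial-condition} is precisely the hypothesis $E_f(x^{(0)}) < \tau$, $\phi(E_f(x^{(0)})) \le 1$ of Theorem~\ref{thm:general-semilocal-convergence-theorem-SM}. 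Conclusions (i)--(iv) then follow verbatim from the corresponding parts of that theorem with $r = 3$, once one substitutes $S_k(3) = 1 + 3 + \cdots + 3^{k-1} = (3^{k} - 1)/2$ into \eqref{eq:semilocal-priori} and \eqref{eq:semilocal-posteriori-2}; conclusion (v) follows from part (vi) of the general theorem, noting that $\phi(E_f(x^{(0)})) < 1$ together with $E_f(x^{(0)}) < 1/(a+b) < 1$ (as $a,b \ge 1$) guarantees all of its hypotheses.

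I do not expect a genuine obstacle here: the analytic content has already been isolated in Lemmas~\ref{lem:Ehrlich-1}--\ref{lem:Ehrlich-3} and in Theorem~\ref{thm:iteration-function-properties}. The only care needed is bookkeeping --- confirming that the monotonicity and positivity of $\gamma$ and $\psi$ and the quasi-homogeneity of $\beta$ hold on the entire interval $[0,1/(a+b))$ rather than merely near the origin, checking that the elementary algebraic identity $\beta/\psi = \phi$ indeed reduces to \eqref{eq:phi-Ehrlich}, and matching the notation ($r = 3$, $\lambda = \phi(E_f(x^{(0)}))$, $\theta = \psi(E_f(x^{(0)}))$, $A_k = \gamma(E_f(x^{(0)})\lambda^{(3^k-1)/2})$) between the two statements.
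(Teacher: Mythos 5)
Your proposal is correct and follows essentially the same route as the paper, whose proof of Theorem~\ref{thm:Ehrlich-semilocal} is simply an appeal to Theorem~\ref{thm:general-semilocal-convergence-theorem-SM} together with Lemmas~\ref{lem:Ehrlich-1} and~\ref{lem:Ehrlich-3}, with $\tau = 1/(a+b)$, $\gamma$ and $\beta$ as in \eqref{eq:gamma-Ehrlich} and \eqref{eq:beta-Ehrlich}, and $r = m+1 = 3$. The bookkeeping you spell out (quasi-homogeneity of $\beta$ of degree $2$, positivity and monotonicity of $\gamma$ and $\psi$ on $[0,1/(a+b))$, $\phi = \beta/\psi$ matching \eqref{eq:phi-Ehrlich}, and $S_k(3) = (3^k-1)/2$) is exactly what the paper leaves implicit.
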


\begin{proof}
It follows immediately from Theorem~\ref{thm:general-semilocal-convergence-theorem-SM} and 
Lemmas \ref{lem:Ehrlich-1} and \ref{lem:Ehrlich-3}.
\end{proof}

\begin{rem}
In our work \cite{Pro06c} we have stated without proof a weaker version of Theorem~\ref{thm:Ehrlich-semilocal},  
which generalizes and improves the results in \cite{Pet96,PI97,PH97,PH01}.
It should be noted that the corollaries 3.1, 3.2, 3.3 and 3.4 given in \cite{Pro06c} can be improved using 
Theorem~\ref{thm:Ehrlich-semilocal} instead of Theorem~3.1 of \cite{Pro06c}.  
We end this section with a result which improves Corollary~3.2 of \cite{Pro06c}.
This result generalizes and improves Theorem~1 of Zheng and Huang \cite{ZH00} as well as the results \cite{Pet96,PI97,PH97}.     
\end{rem} 

\begin{cor} \label{cor:semilocal-convergence-Ehrlich-2}
Let ${(\Kset,|\cdot|)}$ be a complete normed field,
$f \in \Kset[z]$ be a polynomial of degree $n \ge 2$ and $1 \le p \le \infty$.
Suppose $x^{(0)} \in \Kset^n$ is an initial guess with distinct components satisfying
\begin{equation} \label{eq:Ehrlich-semilocal-initial-condition-cor}
\left\| \frac{W_f(x^{(0)})}{d(x^{(0)})} \right\|_p \le \frac{1}{2 a + 2} \, ,
\end{equation}
where ${a = (n - 1)^{1/q}}$. In the case ${n = 2}$ and ${p = \infty}$, we assume that inequality 
\eqref{eq:Ehrlich-semilocal-initial-condition-cor} is strict.
Then $f$ has $n$ simple zeros in $\Kset$, 
the Ehrlich iteration \eqref{eq:Ehrlich-iteration} is well-defined, lies in the closed ball ${\overline{U}(x_0,\rho)}$ with radius 
$\rho$ defined by \eqref{eq:inclusion-radius-Ehrlich}, converges cubically to $\xi$ with error estimates 
\eqref{eq:Ehrlich-semilocal-priori}, 
\eqref{eq:Ehrlich-semilocal-posteriori-1} and \eqref{eq:Ehrlich-semilocal-posteriori-2}, 
and for every ${k \ge 0}$ the closed disks \eqref{eq:disks-disjoint-Ehrlich} are mutually disjoint and each of them contains exactly one zero of $f$. 
\end{cor}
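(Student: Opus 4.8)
The plan is to obtain Corollary~\ref{cor:semilocal-convergence-Ehrlich-2} as a direct consequence of Theorem~\ref{thm:Ehrlich-semilocal}. Writing $R = E_f(x^{(0)}) = \|W_f(x^{(0)})/d(x^{(0)})\|_p$, I would show that the single hypothesis \eqref{eq:Ehrlich-semilocal-initial-condition-cor}, namely $R \le 1/(2a+2)$ (with the inequality strict when $n=2$ and $p=\infty$), forces both requirements in \eqref{eq:Ehrlich-semilocal-initial-condition} and in fact the strict bound $\phi(R) < 1$; the latter is exactly what upgrades the conclusions of Theorem~\ref{thm:Ehrlich-semilocal} to cubic convergence and to the localization of $n$ simple zeros in the mutually disjoint disks, so that every assertion of the corollary is recovered.

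The condition $R < 1/(a+b)$ is the easy half: since $a=(n-1)^{1/q}\ge 1$ and $b=2^{1/q}\le 2$, we have $2(a+1)-(a+b)=a+2-b\ge 1>0$, hence $1/(2a+2)<1/(a+b)$ and therefore $R<1/(a+b)$. It then remains to control $\phi$. Because the explicit formula \eqref{eq:phi-Ehrlich} exhibits $\phi$ as a product of nonnegative nondecreasing factors on $[0,1/(a+b))$, it is nondecreasing (indeed strictly increasing for $t>0$) there, so it suffices to evaluate $\phi$ at the point $t_0:=1/(2a+2)$ — and in the exceptional case $n=2$, $p=\infty$ to use instead $R<t_0$ together with the strict monotonicity of $\phi$. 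A short substitution using $1-(a+1)t_0=1/2$ and $1-(a+b)t_0=(a+2-b)/(2a+2)$ turns \eqref{eq:phi-Ehrlich} into
\[
\phi(t_0) = \frac{a}{(a+1)(a+2-b)}\left(1 + \frac{a}{(n-1)(a+2-b)}\right)^{n-1},
\]
so the whole matter reduces to the inequality $\phi(t_0)\le 1$.

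I expect this last inequality to be the main obstacle, because it is \emph{sharp}: equality should hold precisely in the limiting case $n=2$, $p=\infty$, which is why the corollary must assume a strict hypothesis there and why a one-line estimate will not suffice. I would split the verification according to the value of $c:=a+2-b$, noting that $1\le b\le 2$ gives $a\le c\le a+1$, and that for $n\ge 3$ one also has $b=2^{1/q}\le (n-1)^{1/q}=a$, hence $c\ge 2$. For $n=2$ (so $a=1$) the expression collapses to $\phi(t_0)=(4-b)/(2(3-b)^2)$, and the elementary identity $2(3-b)^2-(4-b)=(2-b)(7-2b)\ge 0$ on $[1,2]$ yields $\phi(t_0)\le 1$ with equality iff $b=2$, i.e.\ $p=\infty$. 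For $n\ge 3$ I would use $(1+x)^{n-1}\le e^{(n-1)x}$ to get $\phi(t_0)\le w\,e^{w}/(a+1)$ with $w=a/c\le\min\{1,a/2\}$ (since $c\ge\max\{a,2\}$), and then note $w\,e^{w}\le e<3\le a+1$ when $a\ge 2$, while $w\,e^{w}\le\tfrac a2 e^{a/2}<a+1$ for $1\le a<2$ by a routine one-variable check; either way $\phi(t_0)<1$. Combining the two cases, $\phi(R)<1$ holds in all situations covered by the hypothesis, so \eqref{eq:Ehrlich-semilocal-initial-condition} is satisfied with strict second inequality and every conclusion of the corollary follows from Theorem~\ref{thm:Ehrlich-semilocal}.
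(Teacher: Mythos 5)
Your proposal is correct and follows essentially the same route as the paper: both reduce the corollary, via Theorem~\ref{thm:Ehrlich-semilocal} and the monotonicity of $\phi$, to verifying $\phi\bigl(1/(2a+2)\bigr)\le 1$ with equality exactly when $n=2$ and $p=\infty$, arriving at the same closed-form value \eqref{eq:phi(R)}. The only divergence is cosmetic, in the elementary case analysis (you split $n\ge 3$ by $a\ge 2$ versus $1\le a<2$ and factor the $n=2$ case explicitly, while the paper splits by $a\ge e-1$ versus $a<e-1$), and you additionally make explicit the monotonicity of $\phi$ that the paper uses implicitly.
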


\begin{proof}
Let ${R = 1 / (2 a + 2)}$. By Theorem~\ref{thm:Ehrlich-semilocal} it suffices to prove that ${\phi(R) \le 1}$ 
with equality only if ${n = 2}$ and ${p = \infty}$. 
It is easy to compute that
\begin{equation} \label{eq:phi(R)}
	\phi(R) = \frac{a}{(a + 1)(a + 2 - b)} \left (1+ \frac{a}{(n - 1) (a + 2 - b)} \right)^{n-1}. 
\end{equation} 
Note that ${a \ge 1}$ and ${1 \le b \le 2}$.
Using the inequality ${a / (a + 2 - b) \le 1}$, we obtain
\begin{equation} \label{eq:phi(R)-estimates}
\phi(R) < \frac{e}{a + 1} \quad\text{and}\quad \phi(R) < \frac{a e}{(a + 1)(a + 2 - b)} \, .
\end{equation}
If $a \ge e - 1$, then from the first estimate in \eqref{eq:phi(R)-estimates}, we get ${\phi(R) < 1}$.
If $a < e - 1$ and ${n \ge 3}$, then from the inequality ${a \le b}$ and the second estimate in \eqref{eq:phi(R)-estimates}, we get 
\(
{\phi(R) < a e / (2 a + 2) \le (e - 1) / 2 < 1}.
\)
In the case ${n = 2}$, we have ${\phi(R) = (2 - b / 2) / (3 - b)^2} \le 1$ with equality if and only if ${p = \infty}$.
This completes the proof of the corollary.
\end{proof}

%
%

\section{Semilocal convergence of Dochev-Byrnev's method}
\label{sec:Semilocal-convergence-of-Dochev-Byrnev-method}

In this section, we first show that both methods \eqref{eq:Dochev-Byrnev-iteration} and \eqref{eq:Tanabe-iteration} are identical.
Second, we provide a new semilocal convergence theorem with error estimates for the Dochev-Byrnev method. 
Our theorem generalizes and improves the result of
Ili\'c and Herceg \cite{IH98} and 
Petkovi\'c, Herceg and Ili\'c \cite{PHI97,PHI98}.

\begin{thm} \label{thm:Dochev-Byrnev-Tanabe-equivalence}
The Dochev-Byrnev method \eqref{eq:Dochev-Byrnev-iteration} is identical with the Pre{\v s}i\'c-Tanabe method \eqref{eq:Tanabe-iteration}, that is,
\begin{equation} \label{eq:Dochev-Byrnev=Tanabe}
	\mathscr{F}(x) = \mathscr{T}(x) \quad\text{for all } \, x \in \mathcal{D}.
\end{equation}
where the iteration functions $\mathscr{F}$ and $\mathscr{T}$ are defined by 
\eqref{eq:Dochev-Byrnev-iteration-function} and \eqref{eq:Tanabe-iteration-function}, respectively.
\end{thm}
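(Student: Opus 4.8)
The plan is to fix an arbitrary $x \in \mathcal{D}$ and an index $i \in I_n$, rewrite each of the three rational expressions occurring in \eqref{eq:Dochev-Byrnev-iteration-function} — namely $f(x_i)/g'(x_i)$, $f'(x_i)/g'(x_i)$ and $g''(x_i)/g'(x_i)$ — in terms of the Weierstrass corrections $W_j(x)$ and the sum $\sigma_i(x)$ from \eqref{eq:sigma}, and then check that after substitution $\mathscr{F}_i(x)$ collapses exactly to $\mathscr{T}_i(x)$. Since each of those three ratios, and each $W_j(x)$, is unchanged when $f$ is replaced by $f/C_0$, I may assume without loss of generality that $f$ is monic; then $g(z) = \prod_{k=1}^n (z - x_k)$ by \eqref{eq:polynomial-g}, and, because $x$ has pairwise distinct components, $g'(x_i) = \prod_{j \ne i}(x_i - x_j) \ne 0$, so every expression in \eqref{eq:Dochev-Byrnev-iteration-function} is well-defined.

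Next I would record the three identities. From $g'(x_i) = \prod_{j \ne i}(x_i - x_j)$ and the definition \eqref{eq:Weierstrass-correction} one gets $f(x_i)/g'(x_i) = W_i(x)$ at once. Writing $g(z) = (z - x_i)\,h(z)$ with $h(z) = \prod_{j \ne i}(z - x_j)$, a direct differentiation gives $g'(x_i) = h(x_i)$ and $g''(x_i) = 2h'(x_i)$, hence $g''(x_i)/g'(x_i) = 2\,h'(x_i)/h(x_i) = 2\sum_{j \ne i} 1/(x_i - x_j)$ by the logarithmic-derivative formula for $h$. Finally, dividing the identity \eqref{eq:f-derivative} — valid for every monic $f$ of degree $n \ge 2$, every $x \in \mathcal{D}$ and every $i \in I_n$ — by $\prod_{j \ne i}(x_i - x_j) = g'(x_i)$ yields $f'(x_i)/g'(x_i) = 1 + W_i(x)\sum_{j \ne i} 1/(x_i - x_j) + \sigma_i(x)$.

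Substituting these three expressions into the bracketed factor $2 - f'(x_i)/g'(x_i) + \tfrac{1}{2}\,(f(x_i)/g'(x_i))(g''(x_i)/g'(x_i))$ of \eqref{eq:Dochev-Byrnev-iteration-function}, the two terms carrying $W_i(x)\sum_{j\ne i}1/(x_i-x_j)$ — one contributed with sign $+$ by the $\tfrac12$-term (the factor $2$ from $g''(x_i)/g'(x_i)$ absorbing the $\tfrac12$) and one with sign $-$ by $-f'(x_i)/g'(x_i)$ — cancel, so the factor reduces to $1 - \sigma_i(x)$. Therefore $\mathscr{F}_i(x) = x_i - W_i(x)\bigl(1 - \sigma_i(x)\bigr)$, which is precisely $\mathscr{T}_i(x)$ by \eqref{eq:Tanabe-iteration-function}. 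As $i \in I_n$ and $x \in \mathcal{D}$ were arbitrary, this proves \eqref{eq:Dochev-Byrnev=Tanabe}.

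The computation is essentially a one-liner once the ingredients are assembled; the only point needing care — the ``obstacle'', such as it is — is the formula for $f'(x_i)/g'(x_i)$: an expansion of $f'(x_i)$ directly in the $x_j$ and the zeros of $f$ would be awkward, so the key move is to invoke the identity \eqref{eq:f-derivative} of Proinov and Cholakov \cite{PC14a}, after which the cancellation against the $g''$-term is automatic. One should also note, using $x \in \mathcal{D}$, that no division by zero occurs.
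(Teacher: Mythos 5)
Your proposal is correct and follows essentially the same route as the paper: reduce to the monic case, express $f(x_i)/g'(x_i)=W_i(x)$ and $g''(x_i)/g'(x_i)=2\sum_{j\ne i}1/(x_i-x_j)$, and then eliminate $f'(x_i)$ via the identity \eqref{eq:f-derivative}, after which the bracket collapses to $1-\sigma_i(x)$. The only cosmetic difference is the order of substitution (you insert all three ratios into the bracket at once, while the paper substitutes \eqref{eq:f-derivative} in a second step), which does not change the argument.
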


\begin{proof}
Without loss of generality we may assume that $f$ is monic.
Let ${x \in \mathcal{D}}$ be fixed. 
It is easy to see that for every ${i \in I_n}$,
\[
g'(x_i) = \prod_{j \ne i} {(x_i - x_j)}, \quad \frac{f(x_i)}{g'(x_i)} = W_i(x) \quad\text{and}\quad 
\frac{g''(x_i)}{g'(x_i)} = 2 \sum_{j \ne i} {\frac{1}{x_i - x_j}} \, ,
\]
where the polynomial $g$ is defined by \eqref{eq:polynomial-g} with ${C_0 = 1}$.
Using these equalities, we can write \eqref{eq:Dochev-Byrnev-iteration-function} in the form
\[
\mathscr{F}_i(x) = x_i - W_i(x) \left( 2 - \frac{f'(x_i)}{\prod_{j \ne i} {(x_i - x_j)}} + \sum_{j \ne i} {\frac{W_i(x)}{x_i - x_j}}
\right).
\]
It follows from this and \eqref{eq:f-derivative} that 
\[
\mathscr{F}_i(x) = x_i - W_i(x) \left( 1 - \sum_{j \ne i} {\frac{W_j(x)}{x_i - x_j}}
\right).
\]
From this and \eqref{eq:Tanabe-iteration-function}, we obtain \eqref{eq:Dochev-Byrnev=Tanabe}.
This completes the proof.
\end{proof}

In this section, we again continue to use the notations $\sigma_i(x)$ and $\hat{\sigma}_i(x)$ defined by \eqref{eq:sigma}, 
but now ${\hat{x} = \mathscr{F}(x) = \mathscr{T}(x)}$.

\begin{lem} \label{lem:Tanabe-1}
Let ${f \in \Kset[z]}$ be a  polynomial of degree ${n \ge 2}$, ${1 \le p \le \infty}$ and ${x \in \mathcal{D}}$. 
Then 
\begin{equation} \label{eq:Tanabe-1}
\|x - \mathscr{F}(x)\| \preceq \gamma(E_f(x)) \, \|W_f(x)\|,
\end{equation}
where the real function $\gamma$ is defined by
\begin{equation} \label{eq:gamma-Tanabe}
\gamma(t) = 1 + a t.
\end{equation}
\end{lem}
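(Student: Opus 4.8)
The plan is to compute the Weierstrass-style correction $x_i - \mathscr{F}_i(x)$ directly from the Pre\v{s}i\'c--Tanabe form \eqref{eq:Tanabe-iteration-function}, which is legitimate since Theorem~\ref{thm:Dochev-Byrnev-Tanabe-equivalence} already gives $\mathscr{F}(x) = \mathscr{T}(x)$ on all of $\mathcal{D}$. From \eqref{eq:Tanabe-iteration-function} we read off, for each $i \in I_n$,
\[
x_i - \mathscr{F}_i(x) = W_i(x)\bigl(1 - \sigma_i(x)\bigr),
\]
where $\sigma_i(x) = \sum_{j\ne i} W_j(x)/(x_i - x_j)$ as in \eqref{eq:sigma}. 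Taking absolute values and using the triangle inequality gives $|x_i - \mathscr{F}_i(x)| \le |W_i(x)|\,(1 + |\sigma_i(x)|)$.

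Next I would invoke the bound \eqref{eq:sigma-estimate}, which is stated in the excerpt for every $x \in \mathcal{D}$ and reads $|\sigma_i(x)| \le a\,E_f(x)$ with $a = (n-1)^{1/q}$. Substituting this in yields
\[
|x_i - \mathscr{F}_i(x)| \le (1 + a\,E_f(x))\,|W_i(x)| = \gamma(E_f(x))\,|W_i(x)|,
\]
with $\gamma(t) = 1 + at$ as in \eqref{eq:gamma-Tanabe}. Since this holds coordinatewise for every $i \in I_n$, it is exactly the cone-norm inequality \eqref{eq:Tanabe-1}, recalling that $\|x - \mathscr{F}(x)\| = (|x_1 - \mathscr{F}_1(x)|,\ldots,|x_n - \mathscr{F}_n(x)|)$ and $\|W_f(x)\| = (|W_1(x)|,\ldots,|W_n(x)|)$, and that $\gamma(E_f(x))$ is a nonnegative scalar.

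There is essentially no obstacle here: the lemma has no hypothesis beyond $x \in \mathcal{D}$ (no smallness condition on $E_f(x)$ is required, in contrast to the Ehrlich case), because the Pre\v{s}i\'c--Tanabe correction involves no inversion of $1 + \sigma_i(x)$. The only point to be slightly careful about is to use the already-proved identity $\mathscr{F} = \mathscr{T}$ rather than the unwieldy original formula \eqref{eq:Dochev-Byrnev-iteration-function}, and to note that \eqref{eq:sigma-estimate} is valid unconditionally on $\mathcal{D}$ since it follows purely from the triangle inequality, the definition of $d(x)$, and H\"older's inequality. If one preferred to avoid citing \eqref{eq:sigma-estimate}, the same estimate for $|\sigma_i(x)|$ can be reproduced in one line, but reusing it keeps the proof to a couple of sentences.
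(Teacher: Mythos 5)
Your proof is correct and follows exactly the paper's argument: rewrite $\mathscr{F}_i$ via the Pre\v{s}i\'c--Tanabe form to get $x_i - \mathscr{F}_i(x) = W_i(x)(1-\sigma_i(x))$, then bound $|\sigma_i(x)| \le a\,E_f(x)$ using \eqref{eq:sigma-estimate}, which holds on all of $\mathcal{D}$ with no smallness hypothesis. No differences of substance from the paper's proof.
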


\begin{proof}
It follows from the definition of $\hat{x}$, \eqref{eq:Dochev-Byrnev=Tanabe} and \eqref{eq:Tanabe-iteration-function} that 
\begin{equation} \label{eq:eq:Tanabe-iteration-function-1}
\hat{x}_i = x_i  - W_i(x) ( 1 - \sigma_i(x)).
\end{equation}
From \eqref{eq:eq:Tanabe-iteration-function-1} and \eqref{eq:sigma}, we obtain
\[
|x_i - \mathscr{F}_i(x)| = |(1 - \sigma_i(x)) \, W_i(x)| \le (1 + |\sigma_i(x)|) \, |W_i(x)| \le \gamma(E_f(x)) \, |W_i(x)|
\]
which implies \eqref{eq:Tanabe-1}.
\end{proof}
 
In accordance with \eqref{eq:main-psi-mu}, we define the functions $\psi$ and $\mu$ by
\begin{equation} \label{eq:psi-mu-Tanabe}
\psi(t) = 1 - b t \gamma(t) = 1 - b t - a b t^2 \quad\text{and}\quad \mu(t) = 1 - t \gamma(t) = 1 - t - a t^2,
\end{equation}
where $\gamma$ is defined by \eqref{eq:gamma-Tanabe}. Note that ${\psi(t) > 0}$ if ${0 \le t < 2 / (b + \sqrt{b^2 + 4 a b})}$.

\begin{lem} \label{lem:Tanabe-2}
Let ${f \in \Kset[z]}$ be a  polynomial of degree ${n \ge 2}$, ${1 \le p \le \infty}$ and ${x \in \mathcal{D}}$ be a vector such that
\begin{equation} \label{eq:Tanabe-3-initial-condition}
E_f(x) < \min \left\{ \frac{1}{a} , \frac{2}{b + \sqrt{b^2 + 4 a b}} \right\} \, .
\end{equation}
Then ${\mathscr{F}(x) \in \mathcal{D}}$ and for each ${i \in I_n \,}$,
\begin{equation} \label{eq:Tanabe-2}
W_i(\hat{x}) = \frac{(x_i - \hat{x}_i) (\sigma_i(x) - \hat{\sigma}_i(x) + \sigma_i(x) \, \hat{\sigma}_i(x))}{1 -\sigma_i(x)} 
\prod_{j \ne i} {\frac{\hat{x}_i - x_j}{\hat{x}_i - \hat{x}_j}} \, ,
\end{equation}
where ${\hat{x} = \mathscr{F}(x)}$.
\end{lem}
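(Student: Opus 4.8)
The plan is to follow the proof of Lemma~\ref{lem:Ehrlich-2} almost verbatim, replacing the B\"orsch-Supan identity \eqref{eq:BS-IF-form} used there by the Pre{\v s}i\'c-Tanabe identity \eqref{eq:eq:Tanabe-iteration-function-1}. First I would check that both sides of \eqref{eq:Tanabe-2} are well-defined. Put $\tau = \min\{1/a,\,2/(b+\sqrt{b^2+4ab})\}$ and $J = [0,\tau)$, so that $E_f(x)\in J$ by \eqref{eq:Tanabe-3-initial-condition}. On the one hand, \eqref{eq:sigma-estimate} together with $E_f(x) < 1/a$ gives $|\sigma_i(x)| \le a\,E_f(x) < 1$, hence $1 - \sigma_i(x) \ne 0$, so the scalar factor $1/(1-\sigma_i(x))$ in \eqref{eq:Tanabe-2} is meaningful. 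On the other hand, by the remark following \eqref{eq:psi-mu-Tanabe} the function $\psi$ is positive on $J$, and by Lemma~\ref{lem:Tanabe-1} the vector $x$ satisfies hypothesis \eqref{eq:main-condition-gamma} of Theorem~\ref{thm:iteration-function-properties} with $\gamma(t) = 1 + at$. Hence Theorem~\ref{thm:iteration-function-properties} applies to $T = \mathscr{F}$ at the point $x$ and yields $\mathscr{F}(x) \in \mathcal{D}$ by (i), as well as $\hat{x}_i \ne \hat{x}_j$ and $\hat{x}_i \ne x_j$ for $j \ne i$ by (ii) and (iii); in particular $\hat\sigma_i(x)$ is defined and the product in \eqref{eq:Tanabe-2} makes sense.

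Next I would derive the identity itself. As in Lemma~\ref{lem:Ehrlich-2}, I may assume $f$ monic and apply Proposition~8.1 of \cite{Pro15a} with $z = \hat{x}_i$ to obtain, exactly as in \eqref{eq:Lagrange-formula-Ehrlich},
\[
f(\hat{x}_i) = \Bigl[(\hat{x}_i - x_i) + W_i(x) + (\hat{x}_i - x_i)\,\hat\sigma_i(x)\Bigr]\prod_{j \ne i}(\hat{x}_i - x_j).
\]
By the definition of $\hat{x}$ and \eqref{eq:eq:Tanabe-iteration-function-1} one has $\hat{x}_i - x_i = -W_i(x)(1 - \sigma_i(x))$. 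Substituting this into the bracket and simplifying gives $(\hat{x}_i - x_i) + W_i(x) = W_i(x)\,\sigma_i(x)$ and $(\hat{x}_i - x_i)\hat\sigma_i(x) = -W_i(x)(1-\sigma_i(x))\hat\sigma_i(x)$, so that the bracket collapses to $W_i(x)\bigl(\sigma_i(x) - \hat\sigma_i(x) + \sigma_i(x)\hat\sigma_i(x)\bigr)$. Since $1 - \sigma_i(x) \ne 0$, I may then write $W_i(x) = (x_i - \hat{x}_i)/(1-\sigma_i(x))$, and dividing both sides of the resulting equality by $\prod_{j\ne i}(\hat{x}_i - \hat{x}_j)$ produces precisely \eqref{eq:Tanabe-2}.

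I do not foresee a genuine obstacle here: the argument is structurally the one used for Lemma~\ref{lem:Ehrlich-2}, and the only point requiring care is the well-definedness bookkeeping — which is exactly why \eqref{eq:Tanabe-3-initial-condition} carries two bounds, the bound $1/a$ keeping $1 - \sigma_i(x)$ away from $0$ and the bound $2/(b+\sqrt{b^2+4ab})$ keeping $\psi$ positive so that Theorem~\ref{thm:iteration-function-properties} is applicable — together with the short algebraic collapse of the bracket. The features new to the Dochev-Byrnev (equivalently Pre{\v s}i\'c-Tanabe) case, namely the cross term $\sigma_i(x)\hat\sigma_i(x)$ and the factor $(1-\sigma_i(x))^{-1}$, arise directly from the extra quadratic term in \eqref{eq:Tanabe-iteration-function}; estimating them is the task of the lemma that will follow, and lies outside the present statement.
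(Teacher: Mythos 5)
Your proof is correct and follows essentially the same route as the paper: invoke Lemma~\ref{lem:Tanabe-1} together with Theorem~\ref{thm:iteration-function-properties} to get $\mathscr{F}(x)\in\mathcal{D}$ and well-definedness, then apply Proposition~8.1 of \cite{Pro15a} at $z=\hat{x}_i$ (i.e.\ \eqref{eq:Lagrange-formula-Ehrlich}), substitute the Pre{\v s}i\'c-Tanabe relation $\hat{x}_i-x_i=-W_i(x)(1-\sigma_i(x))$, and divide by $\prod_{j\ne i}(\hat{x}_i-\hat{x}_j)$. Your use of the product form of that relation (rather than the quotient form \eqref{eq:Tanabe-IF-form}) is a harmless cosmetic variant that even sidesteps the degenerate case $W_i(x)=0$; no gaps.
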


\begin{proof}
It follows from Lemma~\ref{lem:Tanabe-1} and Theorem~\ref{thm:iteration-function-properties}(i) that ${\mathscr{F}(x) \in \mathcal{D}}$.
Let ${i \in I_n}$ be fixed.
In view of \eqref{eq:sigma-estimate-2}, we have that ${\sigma_i(x) \ne -1}$.
Therefore, both sides of \eqref{eq:Tanabe-2} are well-defined. Now we shall prove the equality \eqref{eq:Tanabe-2}.
The equality \eqref{eq:Tanabe-iteration-function} can be written in the form
\begin{equation} \label{eq:Tanabe-IF-form}
\frac{W_i(x)}{\hat{x}_i - x_i} =  - \frac{1}{1 - \sigma_i(x)} \, .
\end{equation}
Without loss of generality, we may assume that $f$ is a monic polynomial.
From \eqref{eq:Lagrange-formula-Ehrlich} and \eqref{eq:Tanabe-IF-form}, we get
\[
f(\hat{x}_i) = \frac{((x_i - \hat{x}_i)) (\sigma_i(x) - \hat{\sigma}_i(x) + \sigma_i(x) \, \hat{\sigma}_i(x))}{1 -\sigma_i(x)} 
\prod_{j \ne i}{(\hat{x}_i - x_j)}.
\]
Dividing both sides of this equality by ${\prod_{j \ne i}{(\hat{x}_i - \hat{x}_j)}}$, we get \eqref{eq:Tanabe-2} which completes the proof. 
\end{proof}

\begin{lem} \label{lem:Tanabe-3}
Let ${f \in \Kset[z]}$ be a polynomial of degree ${n \ge 2}$, ${1 \le p \le \infty}$ and
${x \in \mathcal{D}}$ be a vector satisfying condition \eqref{eq:Tanabe-3-initial-condition}.
Then
\begin{equation} \label{eq:Tanabe-3}
\|W_f(\mathscr{F}(x))\| \preceq \beta(E_f(x)) \, \|W_f(x)\|,
\end{equation}
where the  real function $\beta$ is defined as follows
\begin{equation} \label{eq:beta-Tanabe}
\beta(t) = \frac{a t^2 (1 + a t) (1 + a + a t)}{(1 - a t) (1 - t - a t^2)} 
\left (1 + \frac{a t (1+ a t)}{(n - 1) (1 - b t - a b t^2)} \right )^{n - 1},
\end{equation}
where ${a = (n - 1)^{1/q}}$ and ${b = 2^{1/q}}$. 
\end{lem}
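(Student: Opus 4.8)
The plan is to follow the scheme used in the proof of Lemma~\ref{lem:Ehrlich-3}, exploiting the factorization of $W_i(\mathscr{F}(x))$ provided by Lemma~\ref{lem:Tanabe-2}. Fix ${i \in I_n}$ and write ${t = E_f(x)}$ and ${\hat{x} = \mathscr{F}(x)}$. First I would check that the hypotheses of Theorem~\ref{thm:iteration-function-properties} are satisfied: by \eqref{eq:Tanabe-3-initial-condition} we may choose an interval ${J \subset \Rset_+}$ containing $0$ and $t$ on which the function $\psi$ of \eqref{eq:psi-mu-Tanabe} is positive; Lemma~\ref{lem:Tanabe-1} gives exactly condition \eqref{eq:main-condition-gamma} with ${\gamma(t) = 1 + a t}$; and since ${b = 2^{1/q} \ge 1}$ we have ${\psi \le \mu}$ on $J$, so $\mu$ is positive there as well. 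Hence parts (i), (v), (vi) and (vii) of Theorem~\ref{thm:iteration-function-properties} are all available (part (i), and also Lemma~\ref{lem:Tanabe-2}, give ${\hat{x} \in \mathcal{D}}$, so $\hat{\sigma}_i(x)$ and $W_i(\hat{x})$ make sense).

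Next I would rewrite \eqref{eq:Tanabe-2} as
\[
W_i(\hat{x}) = \frac{x_i - \hat{x}_i}{1 - \sigma_i(x)} \, C_i(x) \, B_i(x), \qquad
C_i(x) = \sigma_i(x) - \hat{\sigma}_i(x) + \sigma_i(x) \hat{\sigma}_i(x),
\]
with $B_i(x)$ as in \eqref{eq:A-B-definition}, and estimate the three blocks separately. For the first factor, Lemma~\ref{lem:Tanabe-1} gives ${|x_i - \hat{x}_i| \le \gamma(t) \, |W_i(x)|}$, while \eqref{eq:sigma-estimate} gives ${|1 - \sigma_i(x)| \ge 1 - a t > 0}$, just as in \eqref{eq:sigma-estimate-2}. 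For $C_i(x)$, combining Theorem~\ref{thm:iteration-function-properties}(vi), Theorem~\ref{thm:iteration-function-properties}(v) and \eqref{eq:sigma-estimate} yields
\[
|C_i(x)| \le |\sigma_i(x) - \hat{\sigma}_i(x)| + |\sigma_i(x)| \, |\hat{\sigma}_i(x)|
\le \frac{a \gamma(t) t^2}{\mu(t)} + a t \cdot \frac{a t}{\mu(t)} = \frac{a t^2 (1 + a + a t)}{\mu(t)},
\]
the last step using ${\gamma(t) = 1 + a t}$. For $B_i(x)$, Theorem~\ref{thm:iteration-function-properties}(vii) gives ${|B_i(x)| \le \bigl(1 + a t \gamma(t) / ((n-1)\psi(t))\bigr)^{n-1}}$.

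Finally I would multiply the three estimates and substitute the closed forms ${\gamma(t) = 1 + a t}$, ${\mu(t) = 1 - t - a t^2}$ and ${\psi(t) = 1 - b t - a b t^2}$ from \eqref{eq:psi-mu-Tanabe}; a short computation shows the product collapses precisely to ${\beta(t) \, |W_i(x)|}$ with $\beta$ given by \eqref{eq:beta-Tanabe}, and since ${i \in I_n}$ was arbitrary this is exactly \eqref{eq:Tanabe-3}. The only real obstacle is the bookkeeping: in contrast with the Ehrlich case, $C_i(x)$ here carries the extra cross term ${\sigma_i(x)\hat{\sigma}_i(x)}$ and there is the additional factor ${1/(1 - \sigma_i(x))}$, so one must verify that after inserting the formulas for $\gamma$, $\mu$ and $\psi$ the various numerators and denominators recombine into the stated $\beta$ with nothing left over. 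Positivity of all the denominators on $J$, and hence of $\beta$, is precisely what condition \eqref{eq:Tanabe-3-initial-condition} secures.
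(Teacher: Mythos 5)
Your proposal is correct and follows essentially the same route as the paper: factor $W_i(\mathscr{F}(x))$ via Lemma~\ref{lem:Tanabe-2}, bound the pieces using Lemma~\ref{lem:Tanabe-1}, the estimate \eqref{eq:sigma-estimate} and parts (v), (vi), (vii) of Theorem~\ref{thm:iteration-function-properties}, and then substitute $\gamma$, $\mu$, $\psi$ to recover \eqref{eq:beta-Tanabe}. The only cosmetic difference is that the paper absorbs the factor $1/(1-\sigma_i(x))$ into its $A_i(x)$ while you keep it separate, which changes nothing in the argument.
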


\begin{proof}
Let $i \in I_n$ be fixed.
It follows from Lemma~\ref{lem:Tanabe-2} that
\begin{equation} \label{eq:Tanabe-3a}
	W_i(\mathscr{F}(x)) = A_i(x) B_i(x) (x_i - \hat{x}_i), 
\end{equation}
where 
\[
A_i(x) = \frac{\sigma_i(x) - \hat{\sigma}_i(x) + \sigma_i(x) \, \hat{\sigma}_i(x)}{1 - \sigma_i(x)}
\quad\text{and}\quad 
B_i(x) = \prod_{j \ne i} {\frac{\hat{x}_i - x_j}{\hat{x}_i - \hat{x}_j}} \, . 
\]
From Theorem~\ref{thm:iteration-function-properties}(vi), estimate \eqref{eq:sigma} and 
Theorem~\ref{thm:iteration-function-properties}(vii), we obtain
\begin{equation} \label{eq:A-estimate-Tanabe}
|A_i(x)| \le \frac{|\sigma_i(x) - \hat{\sigma}_i(x)| + |\sigma_i(x)| \, |\hat{\sigma}_i(x)|}{1 - |\sigma_i(x)|}
         \le \frac{a E_f(x)^2 (\gamma(E_f(x)) + a)}{\mu(E_f(x)) (1 - a E_f(x))} \, , 
\end{equation}
where $\mu$ is defined by \eqref{eq:psi-mu-Tanabe}.
The rest of the proof is the same as the one of Lemma~\ref{lem:Ehrlich-3}, except that the first estimate in \eqref{eq:A-B-estimates} 
must be replaced by \eqref{eq:A-estimate-Tanabe}.
\end{proof}

We can define the function $\phi$ by
\begin{equation} \label{eq:phi-Tanabe}
\phi(t) = \frac{\beta(t)}{\psi(t)} = 
\frac{a t^2 (1 + a t) (1 + a + a t)}{(1 - a t) (1 - t - a t^2) (1 - b t - a b t^2)} 
\left (1 + \frac{a t (1+ a t)}{(n - 1) (1 - b t - a b t^2)} \right )^{n - 1},
\end{equation}
where $\psi$ and $\beta$ are define by \eqref{eq:psi-mu-Tanabe} and \eqref{eq:beta-Tanabe}, respectively.

\medskip
Now we are ready to state the main result of this section. 

\begin{thm} 
\label{thm:Tanabe-semilocal}
Let ${(\Kset,|\cdot|)}$ be a complete normed field,
$f \in \Kset[z]$ be a polynomial of degree $n \ge 2$ and $1 \le p \le \infty$.
Suppose $x^{(0)} \in \Kset^n$ is an initial guess with distinct components satisfying
\begin{equation} \label{eq:Tanabe-semilocal-initial-condition}
E_f(x^{(0)}) < \min \left\{ \frac{1}{a} , \frac{2}{b + \sqrt{b^2 + 4 a b}} \right\}
\quad\text{and}\quad \phi(E_f(x^{(0)})) \le 1,
\end{equation}
where the function $E_f$ is defined by \eqref{eq:FIC3-SM}, ${a = (n - 1)^{1/q}}$, ${b = 2^{1/q}}$ and the function $\phi$ is defined by \eqref{eq:phi-Tanabe}.
Then the following statements hold true:
\begin{enumerate}[(i)] 
\item 
\textsc{Convergence}. Starting from $x^{(0)}$, Dochev-Byrnev iteration \eqref{eq:Dochev-Byrnev-iteration} is well-defined, remains in the closed ball ${\overline{U}(x_0,\rho)}$ and converges to a root vector $\xi$ of $f$, where 
\begin{equation} \label{eq:inclusion-radius-Tanabe}
	\rho =  \frac{\gamma(E_f(x^{(0)}))}{1 - \beta(E_f(x^{(0)}))} \, \| W_f(x^{(0)})\|
\end{equation}
and the functions $\beta$ and $\gamma$ are defined by \eqref{eq:beta-Tanabe} and \eqref{eq:gamma-Tanabe}, respectively.
Besides, the convergence is cubic provided that ${\phi(E_f(x^{(0)})) < 1}$.
\item 
\textsc{A priori estimate}. For all $k \ge 0$ we have the following error estimate
\begin{equation}  \label{eq:Tanabe-semilocal-priori}
\left \| x^{(k)} - \xi \right \| \preceq A_k \, \frac{{\theta}^k {\lambda }^{(3^k - 1) / 2}}
{1 - \theta {\lambda}^{3^k } } \left \| W_f(x^{(0)}) \right \| ,
\end{equation}
where $\lambda  = \phi(E_f(x^{(0)}))$, $\theta  = \psi(E_f(x^{(0)}))$,
$A_k = \gamma(E_f(x^{(0)}) \lambda^{(3^k - 1) / 2})$
and the real  function $\psi$ is defined by \eqref{eq:psi-mu-Tanabe}.
\item 
\textsc{First a posteriori estimate}. For all $k \ge 0$ we have the following error estimate
\begin{equation}  \label{eq:Tanabe-semilocal-posteriori-1}
\left\| x^{(k)} - \xi \right\| \preceq \frac{\gamma(E_f(x^{(k)}))}{1 - \beta(E_f(x^{(k)}))} \, \| W_f(x^{(k)})\|.
\end{equation}
\item 
\textsc{Second a posteriori estimate}. For all $k \ge 0$ we have the following error estimate
\begin{equation}  \label{eq:Tanabe-semilocal-posteriori-2}
\left\| x^{(k + 1)} - \xi \right\| \preceq 
\frac{\theta_k \lambda_k}{1 - \theta_k (\lambda_k)^3} \, \gamma(E_f(x^{(k + 1)})) \left\| W_f(x^{(k)}) \right\| ,
\end{equation}
where $\lambda_k = \phi(E_f(x^{(k)}))$ and  $\theta_k = \psi(E_f(x^{(k)}))$.
\item \textsc{Localization of the zeros}.
If ${\phi(E_f(x^{(0)} )) < 1}$, then $f$ has $n$ simple zeros in $\Kset$ and for every
${k \ge 0}$ the closed disks
\begin{equation}  \label{eq:disks-disjoint-Tanabe}
D_i^k = \{z \in \Kset : |z-x_i^{(k)}| \le r_i^{(k)} \},
\quad i = 1,2,\ldots,n,
\end{equation}
where 
\[
r_i^{(k)} = \frac{\gamma(E_f(x^{(k)}))}{1 - \beta(E_f(x^{(k)}))} \, |W_i(x^{(k)})|,
\]
are mutually disjoint and each of them contains exactly one zero of $f$.	
\end{enumerate}
\end{thm}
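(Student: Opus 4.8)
The plan is to obtain Theorem~\ref{thm:Tanabe-semilocal} as a direct application of Theorem~\ref{thm:general-semilocal-convergence-theorem-SM} to the iteration function $T = \mathscr{F}$, in complete analogy with the derivation of Theorem~\ref{thm:Ehrlich-semilocal}. By Theorem~\ref{thm:Dochev-Byrnev-Tanabe-equivalence} we may equally well work with $\mathscr{F}$ or $\mathscr{T}$. We take $D = \mathcal{D}$, so hypothesis (a) of Theorem~\ref{thm:general-semilocal-convergence-theorem-SM} holds trivially for every $x\in\mathcal{D}$, and we set $\tau = \min\{1/a,\, 2/(b+\sqrt{b^2+4ab})\}$, which is precisely the smallness bound in \eqref{eq:Tanabe-semilocal-initial-condition}. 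First I would check that the control functions are admissible on $J=[0,\tau)$: the function $\gamma(t)=1+at$ from \eqref{eq:gamma-Tanabe} is positive and nondecreasing, and the associated function $\psi(t)=1-bt\gamma(t)=1-bt-abt^2$ from \eqref{eq:psi-mu-Tanabe} is positive on $[0,\tau)$ exactly because $2/(b+\sqrt{b^2+4ab})$ is the positive root of $\psi$.

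Next, hypothesis (b) of the general theorem is exactly Lemma~\ref{lem:Tanabe-1} (valid for all $x\in\mathcal{D}$, hence in particular whenever $E_f(x)<\tau$), and hypothesis (c) is Lemma~\ref{lem:Tanabe-3}, whose standing assumption \eqref{eq:Tanabe-3-initial-condition} is just $E_f(x)<\tau$. It remains to verify that $\beta$ from \eqref{eq:beta-Tanabe} is quasi-homogeneous of some degree $m\ge 0$. I would write $\beta(t)=at^2 h(t)$, where $h$ is the product of $(1+at)/(1-at)$, $(1+a+at)/(1-t-at^2)$ and $\bigl(1+at(1+at)/((n-1)(1-bt-abt^2))\bigr)^{n-1}$, note that on $[0,\tau)$ each factor has increasing numerator and positive, decreasing denominator (positivity of $1-at$, $1-t-at^2=\mu(t)\ge\psi(t)$ and $1-bt-abt^2=\psi(t)$ following from $\tau\le 1/a$ and the choice of $\tau$), hence each factor is nonnegative and nondecreasing, so $\beta(t)/t^2 = a\,h(t)$ is nondecreasing and $\beta$ is quasi-homogeneous of degree $m=2$. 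Consequently the convergence order is $r=m+1=3$ (cubic when $\phi(E_f(x^{(0)}))<1$), and $S_k(r)=S_k(3)=(3^k-1)/2$, which yields the exponents appearing in the a~priori estimate \eqref{eq:Tanabe-semilocal-priori}.

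With these verifications in hand, the conditions $E_f(x^{(0)})<\tau$ and $\phi(E_f(x^{(0)}))\le 1$ of the present theorem are precisely those of Theorem~\ref{thm:general-semilocal-convergence-theorem-SM} with $\phi=\beta/\psi$ as in \eqref{eq:phi-Tanabe}, and statements (i)--(v) follow term by term from statements (i)--(vi) of that theorem; for the localization part one only needs, besides $\phi(E_f(x^{(0)}))<1$, the inequality $E_f(x^{(0)})<1$, which is automatic since $E_f(x^{(0)})<\tau\le 1/a\le 1$. I do not expect a genuine obstacle here: the only mildly delicate point is the bookkeeping showing that $\beta$ is quasi-homogeneous of degree exactly $2$ (so that the claimed \emph{cubic} rate, not merely $r>1$, is obtained) and that all denominators occurring in $\beta$, $\psi$ and $\mu$ remain positive on $[0,\tau)$ — both of which reduce to the elementary monotonicity facts above together with the definition of $\tau$.
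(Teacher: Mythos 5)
Your proposal is correct and follows exactly the paper's route: the theorem is obtained by applying Theorem~\ref{thm:general-semilocal-convergence-theorem-SM} to $T=\mathscr{F}$ with $\tau=\min\{1/a,\,2/(b+\sqrt{b^2+4ab})\}$, using Lemma~\ref{lem:Tanabe-1} for hypothesis (b) and Lemma~\ref{lem:Tanabe-3} for hypothesis (c). The only difference is that you spell out the admissibility checks (positivity of $\psi$ and $\mu$, quasi-homogeneity of $\beta$ of degree $2$, and $E_f(x^{(0)})<1$ for the localization part), which the paper leaves implicit in its one-line proof; these verifications are accurate.
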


\begin{proof}
It follows immediately from Theorem~\ref{thm:general-semilocal-convergence-theorem-SM} and 
Lemmas \ref{lem:Tanabe-1} and \ref{lem:Tanabe-3}.
\end{proof}

The following result improves and complements the result of Petkovi\'c, Herceg and Ili\'c \cite{IH98,PHI97,PHI98} in several directions.

\begin{cor} \label{cor:semilocal-convergence-Tanabe-infinity}
Let ${(\Kset,|\cdot|)}$ be a complete normed field, and let $f \in \Kset[z]$ be a polynomial of degree $n \ge 2$.
Suppose $x^{(0)} \in \Kset^n$ is an initial guess with distinct components satisfying
\[
E_f(x^{(0)}) = \left\| \frac{W(x^{(0)})}{d(x^{(0)})} \right\|_\infty \le \frac{4}{9 n} \, .
\]
Then $f$ has $n$ simple zeros in $\Kset$, 
the Dochev-Byrnev iteration \eqref{eq:Dochev-Byrnev-iteration} is well-defined, lies in the closed ball ${\overline{U}(x_0,\rho)}$ 
with radius $\rho$ defined by \eqref{eq:inclusion-radius-Tanabe}, converges cubically to $\xi$ with error estimates \eqref{eq:Tanabe-semilocal-priori}, \eqref{eq:Tanabe-semilocal-posteriori-1} and \eqref{eq:Tanabe-semilocal-posteriori-2}, 
and for every ${k \ge 0}$ the closed disks \eqref{eq:disks-disjoint-Tanabe} are mutually disjoint and each of them contains exactly one zero of $f$.
\end{cor}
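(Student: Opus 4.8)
The plan is to obtain Corollary~\ref{cor:semilocal-convergence-Tanabe-infinity} as a special case of Theorem~\ref{thm:Tanabe-semilocal}. Since $p = \infty$ has conjugate exponent $q = 1$, we have $a = (n-1)^{1/q} = n - 1$ and $b = 2^{1/q} = 2$, so the admissibility condition in \eqref{eq:Tanabe-semilocal-initial-condition} reads $E_f(x^{(0)}) < \min\{1/(n-1),\, 1/(1+\sqrt{2n-1})\}$. For $R = 4/(9n)$ the first bound, $4/(9n) < 1/(n-1)$, is equivalent to $4n - 4 < 9n$ and is therefore trivial, while the second, $4/(9n) < 1/(1+\sqrt{2n-1})$, is equivalent (both sides being positive when $n \ge 2$) after squaring to $81n^2 - 104n + 32 > 0$, which holds for every $n \ge 2$ because both roots of this quadratic are smaller than $1$. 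Since $\phi$ is nondecreasing on the admissibility interval (Step~1 of the proof of Theorem~\ref{thm:general-semilocal-convergence-theorem-SM}) and $E_f(x^{(0)}) \le R$, it therefore suffices to prove that $\phi(R) \le 1$; in fact we shall obtain the strict inequality $\phi(R) < 1$, and parts~(i) and~(v) of Theorem~\ref{thm:Tanabe-semilocal} then supply the cubic rate and the $n$ simple zeros, with all the remaining assertions following immediately.

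Everything thus reduces to the scalar inequality $\phi(4/(9n)) < 1$ for $n \ge 2$. Substituting $t = 4/(9n)$, $a = n - 1$, $b = 2$ into \eqref{eq:phi-Tanabe} and simplifying --- one uses $at = 4(n-1)/(9n)$, $1 + at = (13n-4)/(9n)$, $1 - bt - abt^2 = (81n^2 - 104n + 32)/(81n^2)$, and the like --- one gets the closed form
\[
\phi\left(\frac{4}{9n}\right) = \frac{144\, n(n-1)(9n^2 + 4n - 4)(13n - 4)}{(5n+4)(81n^2 - 52n + 16)(81n^2 - 104n + 32)} \left(\frac{81n^2 - 52n + 16}{81n^2 - 104n + 32}\right)^{n-1}.
\]
I would then bound the two factors separately. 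The power factor equals $(1 + w_n)^{n-1}$ with $w_n = (52n - 16)/(81n^2 - 104n + 32)$, which lies in $(0,1)$ for $n \ge 2$, so $(1 + w_n)^{n-1} \le e^{(n-1)w_n}$; a short computation gives $(n-1)w_n = (52n^2 - 68n + 16)/(81n^2 - 104n + 32) < 52/81$, whence the power factor is below $e^{52/81} < 1.9004$. The rational prefactor is a ratio of degree-five polynomials in $n$ with positive coefficients, and its limit as $n \to \infty$ is $16848/32805 = 208/405 < 0.5136$.

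Multiplying these bounds, $\phi(4/(9n))$ tends to $(208/405)\,e^{52/81} \approx 0.976 < 1$, so the claim is true in the limit. The main obstacle is that this estimate is genuinely tight: because $e^{52/81}$ exceeds $1/0.5263$, the crude product ``prefactor $\times\, e^{52/81}$'' does not remain below $1$ for the small and moderate $n$ at which the prefactor still slightly exceeds $0.5263$ (this is the case for $2 \le n \le 9$). I would close this gap in one of two routine ways. Either (a) verify $\phi(4/(9n)) < 1$ directly by exact rational arithmetic for the finitely many values $n = 2, 3, \ldots, 9$, and for $n \ge 10$ observe that the prefactor is bounded by a constant strictly below $e^{-52/81}$, so that $\phi(4/(9n)) < 1$ there as well; or (b) replace $\ln(1+w_n) \le w_n$ by the sharper estimate $\ln(1+w_n) \le w_n - w_n^2/2 + w_n^3/3$, valid since $0 \le w_n < 1$, which retains enough concavity that $\mathrm{prefactor}(n)\exp\bigl((n-1)(w_n - w_n^2/2 + w_n^3/3)\bigr) < 1$ can be verified for all $n \ge 2$ by an elementary monotonicity-and-limit argument. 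In either case the computations are routine; the only care required is to keep the intermediate bounds sharp enough to clear the constant $1$.
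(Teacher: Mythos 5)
Your proposal is correct and takes essentially the same route as the paper: the published proof simply sets $\phi_n=\phi(4/(9n))$, asserts that $\phi_n<1$ for every $n\ge 2$, and invokes Theorem~\ref{thm:Tanabe-semilocal}, which is precisely your reduction (your admissibility checks with $a=n-1$, $b=2$ and your closed form for $\phi(4/(9n))$ are correct, and the remaining finite verification you outline is if anything more detailed than the paper's ``it can easily be proved''). Two harmless slips worth noting: the crude bound $\text{prefactor}\times e^{52/81}$ already works at $n=2$ (the prefactor there is $253440/488992\approx 0.5183<e^{-52/81}$, so it fails only for $3\le n\le 9$, and the margin for $n\ge 10$ is thin, about $0.5254$ versus $0.5262$, so the ``observe'' step still needs a monotonicity argument), and the expanded numerator of the prefactor does not in fact have all positive coefficients, though neither remark affects your argument.
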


\begin{proof}
Define the sequence $(\phi_n)_{n=2}^\infty$ by ${\phi_n = \phi(4 / (9 n))}$. 
It can easily be proved that ${\phi_n < 1}$ for every ${n \ge 2}$.
Then the proof follows from Theorem~\ref{thm:Tanabe-semilocal}. 
\end{proof}

\begin{cor} \label{cor:semilocal-convergence-Tanabe-1}
Let ${(\Kset,|\cdot|)}$ be a complete normed field, and let $f \in \Kset[z]$ be a polynomial of degree $n \ge 2$.
Suppose $x^{(0)} \in \Kset^n$ is an initial guess with distinct components satisfying
\[
E_f(x^{(0)}) = \left\| \frac{W(x^{(0)})}{d(x^{(0)})} \right\|_1 \le R = 0.2636\ldots.
\]
where $R$ is the unique solution of the equation
\begin{equation} \label{eq:equation}
\frac{t^2 (1 + t) (2 + t)}{(1 - t) (1 - t - t^2)^2} \, \exp\left(\frac{t + t^2}{1 - t - t^2}\right) = 1 
\end{equation}
in the interval ${(0,(\sqrt{5} - 1) / 2)}$.
Then $f$ has $n$ simple zeros in $\Kset$, 
the Dochev-Byrnev iteration \eqref{eq:Dochev-Byrnev-iteration} is well-defined, lies in the closed ball ${\overline{U}(x_0,\rho)}$ with radius $\rho$ defined by \eqref{eq:inclusion-radius-Tanabe}, converges cubically to $\xi$ with error estimates \eqref{eq:Tanabe-semilocal-priori}, \eqref{eq:Tanabe-semilocal-posteriori-1} and \eqref{eq:Tanabe-semilocal-posteriori-2}, 
and for every ${k \ge 0}$ the closed disks \eqref{eq:disks-disjoint-Tanabe} are mutually disjoint and each of them contains 
exactly one zero of $f$.
\end{cor}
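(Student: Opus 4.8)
The plan is to deduce the corollary from Theorem~\ref{thm:Tanabe-semilocal} in the special case $p = 1$. Then $q = \infty$, so $a = (n-1)^{1/q} = 1$ and $b = 2^{1/q} = 1$; the bound $\min\{1/a,\,2/(b+\sqrt{b^2+4ab})\}$ in \eqref{eq:Tanabe-semilocal-initial-condition} becomes $\min\{1,\,2/(1+\sqrt5)\} = (\sqrt5-1)/2$, and the function $\phi$ of \eqref{eq:phi-Tanabe} reduces to
\[
\phi(t) = \frac{t^2(1+t)(2+t)}{(1-t)(1-t-t^2)^2}\left(1 + \frac{1}{n-1}\cdot\frac{t+t^2}{1-t-t^2}\right)^{n-1}.
\]
Hence it suffices to show that $R < (\sqrt5-1)/2$ and that $\phi(E_f(x^{(0)})) < 1$ whenever $E_f(x^{(0)}) \le R$; Theorem~\ref{thm:Tanabe-semilocal} then yields all the stated conclusions, the strict inequality $\phi < 1$ accounting for the cubic order of convergence and for the disjointness of the disks.

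The key step is to bound $\phi$ by the left-hand side of \eqref{eq:equation}. For $0 < t < (\sqrt5-1)/2$ we have $1 - t - t^2 > 0$, hence $x := (t+t^2)/(1-t-t^2) > 0$, and the elementary inequality $(1+x/m)^m < e^x$ (valid for $x > 0$ and every integer $m \ge 1$, in particular for $m = n-1$, including the case $n = 2$) gives
\[
\phi(t) < \Psi(t) := \frac{t^2(1+t)(2+t)}{(1-t)(1-t-t^2)^2}\exp\!\left(\frac{t+t^2}{1-t-t^2}\right)
\]
for all $n \ge 2$ and all $t \in (0,(\sqrt5-1)/2)$, while $\phi(0) = \Psi(0) = 0$. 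Next I would note that on $(0,(\sqrt5-1)/2)$ the function $\Psi$ is a product of positive, strictly increasing functions: each of $t^2$, $1+t$, $2+t$, $1/(1-t)$ and $1/(1-t-t^2)^2$ is positive and increasing there, and the exponent $(t+t^2)/(1-t-t^2)$ is increasing (an increasing positive numerator over a decreasing positive denominator), so $\exp((t+t^2)/(1-t-t^2))$ is increasing as well. Therefore $\Psi$ is strictly increasing on $(0,(\sqrt5-1)/2)$, with $\Psi(0^+)=0$ and $\Psi(t)\to+\infty$ as $t\to((\sqrt5-1)/2)^-$ because $1-t-t^2\to0^+$. By the intermediate value theorem and strict monotonicity, equation \eqref{eq:equation} has a unique solution $R$ in $(0,(\sqrt5-1)/2)$, and a direct numerical computation gives $R = 0.2636\ldots$; in particular $R < (\sqrt5-1)/2 \approx 0.618$.

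Finally, if $0 < E_f(x^{(0)}) \le R$, then $E_f(x^{(0)}) < (\sqrt5-1)/2$ and, since $\Psi$ is increasing, $\phi(E_f(x^{(0)})) < \Psi(E_f(x^{(0)})) \le \Psi(R) = 1$; if $E_f(x^{(0)}) = 0$, then $\phi(0) = 0 < 1$. In both cases the hypotheses of Theorem~\ref{thm:Tanabe-semilocal} hold with $\phi(E_f(x^{(0)})) < 1$, which proves the corollary. I do not anticipate a genuine obstacle: the only points requiring care are the specialization $a = b = 1$ together with the resulting formula for $\phi$, checking that every factor of $\Psi$ is increasing so that the root $R$ is indeed unique in the indicated interval, and verifying the numerical value $R = 0.2636\ldots$.
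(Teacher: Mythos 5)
Your proposal is correct and follows essentially the same route as the paper: with $p=1$ (so $a=b=1$) you bound $\phi(t)$ by the left-hand side $g(t)$ of \eqref{eq:equation} via $\left(1+\tfrac{x}{n-1}\right)^{n-1}<e^{x}$, conclude $\phi(E_f(x^{(0)}))<1$ for $E_f(x^{(0)})\le R$, and invoke Theorem~\ref{thm:Tanabe-semilocal}. The paper states the inequality $\phi(t)<g(t)$ without proof, so your write-up merely supplies the details (the specialization $a=b=1$, the exponential bound, the monotonicity of $g$ giving uniqueness of $R$, and the numerical value) that the paper leaves as ``easy to show.''
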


\begin{proof}
It is easy to show that ${\phi(t) < g(t)}$ for ${0 < t < (\sqrt{5} - 1) / 2}$, where $g(t)$ denotes the left-hand side of the equation 
\eqref{eq:equation}. Therefore, ${\phi(R) < 1}$ which according to Theorem~\ref{thm:Tanabe-semilocal} completes the proof.
\end{proof}

%
%

\end{document}